\newcommand{\gra}{\G(D,d)}
\newcommand{\dl}{\mathrm{dist}_{\G}(L_1,L_2)}
\newcommand{\ti}{\theta_i}
\newcommand{\sumi}{\sum_{i=1}^d}
\newcommand{\setl}{\{L_1,L_2,...,L_m\}}
\newcommand{\rc}{({\mathrm R}, {\mathrm c})}
\newcommand{\lsh}{{\em locality sensitive hashing}}
\newcommand{\pu}{\Pb[h\in\HH|h(L_1)=h(L_2)]}
\newtheorem{theorem}{Theorem}[section]
\newtheorem{cor}[theorem]{Corollary}
\newtheorem{corollary}[theorem]{Corollary}
\newtheorem{lemma}[theorem]{Lemma}
\newtheorem{sublemma}[theorem]{Sublemma}
\newtheorem{addendum}[theorem]{Addendum}
\newtheorem{prop}[theorem]{Proposition}
\newtheorem{proposition}[theorem]{Proposition}
\newtheorem{assumptions}[theorem]{Assumptions}
\newtheorem{assumption}[theorem]{Assumption}
\newtheorem{conjecture}[theorem]{Conjecture}
\newtheorem{defn}[theorem]{Definition}
\newtheorem{definition}[theorem]{Definition}
\newtheorem{definitions}[theorem]{Definitions}
\newtheorem{notation}[theorem]{Notation}
\newtheorem{rem}[theorem]{Remark}
\newtheorem{claim}[theorem]{Claim}
\newtheorem{remark}[theorem]{Remark}
\newtheorem{remarks}[theorem]{Remarks}
\newtheorem{example}[theorem]{Example}
\newtheorem{examples}[theorem]{Examples}
\newtheorem{fact}[theorem]{Fact}
\newtheorem{problem}[theorem]{Problem}
\newtheorem{observation}[theorem]{Observation}
\newtheorem{condition}[theorem]{Condition}
\newcommand{\acknowledgments}{{\em Acknowledgments.} }
\newcommand{\R}{\mathbb{R}}
\newcommand{\Z}{\mathbb{Z}}
\newcommand{\Q}{\mathbb{Q}}
\newcommand{\C}{\mathbb{C}}
\newcommand{\G}{\mathrm{G}}
\newcommand{\N}{\mathbb{N}}
\newcommand{\Pb}{\mathbb{P}}
\newcommand{\E}{\mathbb{E}}
\newcommand{\Sb}{\mathbb{S}}
\newcommand{\K}{\mathbb{K}}
\newcommand{\x}{\mathbf{x}}
\newcommand{\y}{\mathbf{y}}
\newcommand{\malpha}{\boldsymbol\alpha}
\newcommand{\mtheta}{\boldsymbol\theta}
\newcommand{\B}{B}
\newcommand{\id}{\mathrm{id}}
\newcommand{\ind}{\mathrm{ind}}
\newcommand{\re}{\mathrm{re}}
\newcommand{\im}{\mathrm{im}}
\newcommand{\var}{\mathrm{Var}}
\newcommand{\conv}{\mathrm{conv}}
\newcommand{\rank}{\mathrm{rank}}
\newcommand{\End}{\mathrm{End}}
\newcommand{\tr}{\mathrm{tr}}
\newcommand{\aff}{\mathrm{aff}}
\newcommand{\BB}{\mathcal{B}}
\newcommand{\HH}{\mathcal{H}}
\newcommand{\EE}{\mathcal{E}}
\newcommand{\LL}{\mathcal{L}}
\newcommand{\OO}{\mathcal{O}}
\newcommand{\Sc}{\mathcal{S}}
\newcommand{\XX}{\mathcal{X}}
\newcommand{\DD}{\mathcal{D}}
\newcommand{\MM}{\mathcal{M}}
\newcommand{\JJ}{\mathcal{J}}
\newcommand{\GG}{\mathcal{G}}
\newcommand{\FF}{\mathcal{F}}
\newcommand{\PP}{\mathcal{P}}
\newcommand{\CC}{\mathcal{C}}
\newcommand{\NN}{\mathcal{N}}
\newcommand{\RR}{\mathcal{R}}
\newcommand{\Top}{\mathfrak T}
\newcommand{\bb}{\mathbb}
\newcommand{\mb}{\mathbf}
\newcommand{\mc}{\mathcal}
\newcommand{\mf}{\mathfrak}
\newcommand{\eps}{\varepsilon}
\newcommand{\<}{\langle}
\renewcommand{\>}{\rangle}
\newcommand{\tensor}{\otimes}
\newcommand{\Rgeq}{\R^{\scriptscriptstyle \geq 0}}
\newcommand{\Rleq}{\R^{\scriptscriptstyle \leq 0}}
\newcommand{\bernoulli}[1]{\mathrm{Ber}(#1)}
\newcommand{\normal}[2]{\mc N \left( #1 , #2 \right) }
\newcommand{\minimize}[2]{\text{minimize} \quad #1 \quad \text{subject to} \quad #2}
\newcommand{\prob}[1]{\P\left[ \, #1 \, \right] }
\newcommand{\condprob}[2]{\Pb_{\mu}\left[ \, #1 \mid #2 \, \right] }
\newcommand{\range}[1]{\mathrm{range}(#1)}
\renewcommand{\dim}[1]{\mathrm{dim}(#1)}
\newcommand{\norm}[2]{\left\| #1 \right\|_{#2}}
\newcommand{\expect}[1]{\E\left[ #1 \right]}
\newcommand{\condexp}[2]{\E\left[ #1 \mid #2 \right]}
\newcommand{\innerprod}[2]{\left\< #1, #2 \right\>}
\newcommand{\expfrac}[2]{\exp\left(-\frac{#1}{#2}\right)}
\newcommand{\magnitude}[1]{\left|#1\right|}
\newcommand{\set}[1]{\left\{ #1 \right\}}
\renewcommand{\vec}[2]{\mathrm{\text{#1}}\left[ #2 \right]}
\newcommand{\di}{{\,\mathrm{d}}}
\DeclareMathOperator{\vol}{Vol}
\DeclareMathOperator{\Span}{span}
\DeclareMathOperator{\dist}{dist}
\DeclareMathOperator{\Pmin}{Pmin}
\DeclareMathOperator{\Pmax}{Pmax}
\DeclareMathOperator{\ProjNorm}{ProjNorm}
\DeclareMathOperator{\Ang}{Ang}
\DeclareMathOperator{\argmin}{\mathrm{argmin}}
\begin{document}
\title{Nonparametric Bayesian Regression on Manifolds via Brownian Motion\footnote{This work was supported by NSF awards DMS-09-56072 and DMS-14-18386 and the University of Minnesota Doctoral Dissertation Fellowship Program.}}
\author{Xu Wang\\
       wang1591@umn.edu \\
       Dept.\ of Mathematics\\
       University of Minnesota\\
       Minneapolis, MN 55455
       \and
      Gilad Lerman\\
      lerman@umn.edu \\
      Dept.\ of Mathematics\\
      University of Minnesota\\
      Minneapolis, MN 55455\\
}
\maketitle

\begin{abstract}
This paper proposes a novel framework for manifold-valued regression and establishes its consistency as well as its contraction rate. It assumes a predictor with values in the interval $[0,1]$ and response with values in a compact Riemannian manifold $M$. This setting is useful for applications such as modeling dynamic scenes or shape deformations,  where the visual scene or the deformed objects can be modeled by a manifold. The proposed framework is nonparametric and uses the heat kernel (and its associated Brownian motion) on manifolds as an averaging procedure. It directly generalizes the use of the Gaussian kernel (as a natural model of additive noise) in vector-valued regression problems. In order to avoid explicit dependence on estimates of the heat kernel, we follow a Bayesian setting, where Brownian motion on $M$ induces a prior distribution on the space of continuous functions $C([0,1], M)$.
For the case of discretized Brownian motion, we establish the consistency of the posterior distribution in terms of the $L_{q}$ distances for any $1 \leq q < \infty$.
Most importantly, we establish contraction rate of order $O(n^{-1/4+\epsilon})$ for any fixed $\epsilon>0$, where $n$ is the number of observations. For the continuous Brownian motion we establish weak consistency.
\end{abstract}
\section{Introduction}

In many applications of regression analysis, the response variables lie in Riemannian manifolds.
For example, in directional statistics~\cite{mardia2009directional, fisher1993statistical, fisher1995statistical} the response variables take values in the sphere or the group of rotations. Applications of directional statistics include crystallography~\cite{Murshudov:ba5152}, altitude determination for navigation and guidance control~\cite{Shuster_Oh81}, testing procedure for Gene Ontology cellular component categories~\cite{Olsen_Science10}, visual invariance studies~\cite{Miao:2007:LLG:1288856.1288861} and geostatics~\cite{Webster08}.
Other modern applications of regression give rise to different types of manifold-valued responses. In the regression problem of estimating shape deformations of the brain over time (e.g., for studying brain development, aging or diseases), the response variables lie in the space of shapes~\cite{FishbaughPGD14, nielsen2009emerging, Hong12, Bhattacharya:2012:NIM:2331106, Nietha11,Gee13}.
In the analysis of landmarks~\cite{Hinkle14} the response variables lie in the Lie group of diffeomorphisms.


The quantitative analysis of regression with manifold-valued responses (which we refer to as manifold-valued regression) is still in early stages and is significantly less developed than
statistical analysis of vector-valued regression with manifold-valued predictors~\cite{aswani2011, CS02, NilssonSJ07, Roberto14, Pelle06, Yun13, Chen_Wu_2013}.
A main obstacle for advancing the analysis of manifold-valued regression is that there is no linear structure in general Riemannian manifolds
and thus no direct method for averaging responses.
Parametric methods for regression problems with manifold-valued responses~\cite{FishbaughPGD14, Hong12, Miao:2007:LLG:1288856.1288861, geode13, Hinkle14} directly generalize
the linear or polynomial real-valued regressions to geodesic or Riemannian polynomial manifold-valued regression.
Nevertheless, the geodesic or Riemannian polynomial assumption on the underlying function is often too restrictive and for many applications non-parametric models are required.
To address this issue, Hein~\cite{hein2009robust} and Bhattacharya~\cite{Bhattacharya:2012:NIM:2331106} proposed kernel-smoothing estimators, where in~\cite{hein2009robust} the
predictors and responses take values in manifolds and in~\cite{Bhattacharya:2012:NIM:2331106} the predictors and responses take values in compact metric spaces with special kernels.
Hein~\cite{hein2009robust} proved convergence of the risk function to a minimal risk (w.p.~1; conditioned on the predictor) and Bhattacharya~\cite{Bhattacharya:2012:NIM:2331106}
established consistency of the joint density function of the predictors and the responses.
However, the rate of contraction (that is, the rate at which the posterior distribution contracts to a $\delta$ distribution with respect to the underlying regression function)
of any previously proposed manifold-valued regression estimator was not established. To the best of our knowledge, rate of contraction was only established when both the predictor and response variables are real~\cite{MR2791382} and this work does not seem to extend to manifold-valued regression.


The main goal of this paper is to establish the rate of contraction of a natural estimator for manifold-valued regression (with real-valued predictors).
This estimator is proposed here for the first time.

\subsection{Setting for Regression with Manifold-Valued Responses}
\label{sec:setting}
We assume that the predictor $t$ takes values in $[0,1]$ and the response $x$ takes values in a compact $D$-dimensional Riemannian manifold $M$.
We denote the Riemannian measure on $M$ by $\mu$ ($d \mu$ is the volume form).
We also assume an underlying function
$f_0 \in C([0,1], M)$, which relates between the predictor variables and response variables by determining a density function $p_{f_0(t)}(x)$, so that
\begin{equation}
\label{equ:model0}
x | t \sim p_{f_0(t)}(x).
\end{equation}

We find it natural to define
\begin{equation}\label{equ:model}
p_{f_0(t)}(x)=p_{\sigma^2}(f_0(t), x),
\end{equation}
where $p_{\sigma^2}(f_0(t), x)$ denotes the heat kernel on $M$ centered at $f_0(t)$ and evaluated at time $\sigma^2$.
Equivalently, $p_{\sigma^2}(f_0(t), x)$ is the transition probability of Brownian motion on $M$ (with the measure $\mu$) from $f_0(t)$ to $x$ at time $\sigma^2$.
We note that $\sigma^2$ controls the variance of the distribution of $x | t$ and as $\sigma^2 \rightarrow 0$, the distribution of $x | t$ approaches $\delta_{f_0(t)}$.
In the special case where $M=\R^D$:
\[
p_{\sigma^2}(f_0(t),\mb{x})= \frac{1}{(\sqrt{2\pi} \sigma)^D }\exp \left(\frac{-\|\mb{x}-f_0(t)\|^2}{2\sigma^2}\right),
\]
and this implies the common model: $x-f_0(t)\,|\,t \sim N(\mb{0},\sigma^2 \mb{I})$.

We also assume a distribution $p(t)$ of $t$, whose support equals $[0,1]$, though its exact form is irrelevant in the analysis. At last, we assume $n$ i.i.d.~observations $\{(t_i, x_i)\}_{i=1}^n \subset [0,1] \times M$ with the joint distribution $P_0^n$ and the density function
\begin{equation}
\label{eq:p0N}
p_0^n = \prod_{i=1}^n p(t_i) p_{\sigma^2}(f_0(t_i), x_i).
\end{equation}
{\it The aim of the regression problem is to estimate $f_0$ among all functions in $C([0,1], M)$ given the observations $\{(t_i, x_i)\}_{i=1}^n$.}

For simplicity, we denote throughout the rest of the paper
$$\PP := C([0,1], M).$$

\subsection{Bayesian Perspective: Prior and Posterior Distributions Based on the Brownian Motion}\label{subsec:prior}
Since the set of functions $\PP$ includes Brownian paths, the heat kernel, which expresses the Brownian transition probability, can be used to form a prior distribution on $\PP$.
For the sake of clarity, we need to distinguish between two different ways of using the heat kernel in this paper.
The first one applies the heat kernel $p_{\sigma^2}(f_0(t), x)$ with $t \in [0,1]$, $f_0 \in \PP$ and $x \in M$ (see e.g., Section~\ref{sec:setting}), where the time (or variance) parameter $\sigma^2$ quantifies the ``noise'' in $x$ w.r.t.~the underlying function $f_0(t)$.
The second one uses the heat kernel $p_h(x,y)$ with $h \in \R_{+}$ and $x,y \in M$, where the time parameter $h$ inversely characterizes the ``smoothness'' of the path between $x$ and $y$. The smaller $h$, the smoother the path between $x$ and $y$ (since smaller $h$ makes it less probable for $y$ to get further away from $x$).
Using the heat kernel $p_h(x,y)$, we define in Section~\ref{sec:DefBMP} a continuous Brownian motion (BM) prior distribution and in Section~\ref{sec:DefDBMP} a discretized BM prior distribution.
Section~\ref{sec:posterior} then defines posterior distributions in terms of the prior distributions and the given observations $\{(t_i, x_i)\}_{i=1}^n \subset [0,1] \times M$ of the setting. 

\subsubsection{The Continuous BM Prior on $\PP$}
\label{sec:DefBMP}
We note that a function $f \in \PP$ can be identified as a parametrized path in $M$.
Let's assume that $x\in M$ is a starting point of this path, that is $f(0)=x$.
We denote $\PP_{x}:=\{f \in \PP : f(0)=x\}$. Corollary 2.19 of~\cite{BaerPfaeffle2012} implies that there exists a unique probability measure
$W_{x}$ on $\PP_{x}$ such that for any $n\in \N$, $0<t_1<...<t_n = 1$, and open subsets $U_1, \ldots, U_n \in M$, the following identify is satisfied
\begin{multline}
W_{x}(f \in \PP_{x} \ | \ f(t_1) \in U_1, \ldots, f(t_n) \in U_n) =\\
\int_{U_1 \times \ldots \times U_n}
 p_{t_n-t_{n-1}}(x_n,x_{n-1}) \cdots  p_{t_2-t_{1}} (x_2,x_{1}) p_{t_1}(x_1,x)
d \mu(x_1) \cdots d \mu(x_n).
\end{multline}
We define the conditional prior distribution of $f \in \PP$ given $x \in M$ by $W_x$.
We assume that the distribution of $f(0)=x$ is $\mu/ \mu(M)$ and thus obtain that the prior distribution $\Pi(f) $ of $f \in \PP$
is $W_x \times \mu/\mu(M)$.

\subsubsection{The Discretized BM Prior $\PP$}
\label{sec:DefDBMP}
The continuous BM prior often does not have a density function.
We discuss here a special case of discretized BM, where the density function of the prior is well-defined.
For $0<h<1$  such that $1/h$ is an integer, we define $PGF(h)$ as the set of {\it piecewise geodesic functions} from $[0,1]$ to $M$, where for each $0\leq k<1/h$, $k \in \N$, the interval $[kh, (k+1)h]$ is mapped to the geodesic curve from $f(kh)$ to $f((k+1)h)$. Each function in $PGF(h)$ is determined by its values at $f(kh)$. Let the distribution of $f(0)$ be uniform w.r.t. the Riemannian measure $\mu$
and let the transition probability from $f(kh)$ to $f((k+1)h)$ be given by the heat kernel $p_h(f(kh),f((k+1)h))$.
Then the density function $\pi_h$ (w.r.t. $\mu$) of the discretized BM prior on $PGF(h)$ can be specified as follows:
\[
\displaystyle \pi_h ( f ) = \frac{1}{\mu(M)}\prod_{k=1}^{1/h} p_h(f(kh-h), f(kh)).
\]
The corresponding distribution is denoted by $\Pi_h$.

Throughout the paper we assume a sequence $b_n\rightarrow 0$ with $0<b_n<1$ and with some abuse of notation denote by $\Pi_n$ the sequence of discretized BM priors defined above with $h=b_n$. By construction, $\Pi_n$ is supported on $PGF(b_n)$.
Since $PGF(b_n) \subset \PP$, $\Pi_n$ can also be considered as a set of priors on $\PP$.

\subsubsection{Posterior Distributions}
\label{sec:posterior}
Given observations $\{(t_i,x_i)\}_{i=1}^n$ drawn according to the setting of Section~\ref{sec:setting}, the posterior distribution of $\Pi$ has the density function
\begin{equation}\label{equ:posterior}
\begin{aligned}
\displaystyle \Pi(f\in A|\{(t_i,x_i)\}_{i=1}^n) &\propto \int_{f\in A}\prod_{i=1}^n p(t_i,x_i|f) d\Pi(f) \\
&=\int_{f\in A} \prod_{i=1}^n p_{f(t_i)}(x_i)p(t_i) d\Pi(f),
\end{aligned}
\end{equation}
where the equality in~\eqref{equ:posterior} follows by applying \eqref{equ:model0} and~\eqref{equ:model} to the estimator $f$ of $f_0$.



\subsection{Main Theorems: Posterior Consistency and Rate of Contraction}
\label{sec:theorems}

We establish the posterior consistency for the discretized and continuous BM priors respectively.
That is, we show that as $n$ approaches infinity, the posterior distributions contract with high probability to the distribution
$\delta_{f_0}$ (recall that $f_0$ is the underlying function in $\PP$).
Furthermore, for the discretized BM we study the rate of contraction of the posterior distribution.
The theorem for the discretized BM is formulated in Section~\ref{sec:theorems_discrete} and the one for
the continuous BM (with weaker convergence) in Section~\ref{sec:theorems_continuous}.

\subsubsection{Posterior Consistency and Rate of Contraction for Discretized BM}
\label{sec:theorems_discrete}

Theorem~\ref{theorem:rate} below formulates the rate of contraction of the posterior distribution of the discretized BM with respect to the $L_q$ metric on $\PP$, where $1 \leq q < \infty$. This metric, $d_q$, is defined as follows:
\begin{equation}\label{equ:distdefqp}
\displaystyle d_{q} (f_1, f_2) = \left(\int_{t\in [0,1]} \dist_M( f_1(t), f_2(t) )^q p(t) dt \right)^{1/q},
\end{equation}
where $\dist_M$ denotes the geodesic distance on $M$ and $p(t)$ is the pdf for the predictor $t$.

%
\begin{theorem}\label{theorem:rate} 
Assume a regression setting with a predictor variable $t \in [0,1]$, whose pdf $p(t)$ is strictly positive on $[0,1]$, a response variable $x$ in a compact finite-dimensional Riemannian manifold $M$ and
an underlying and unknown Lipschitz function $f_0 \in \PP$, which relates between $x$ and $t$ according to~\eqref{equ:model0} and~\eqref{equ:model}.
Assume an arbitrarily fixed $0<\epsilon<1/4$ and for $n \in \N$, let $b_n = n^{-1/2+2\epsilon}$ be the sidelength of the set $PGF(b_n)$
and let $\{\Pi_n\}_{n \in \N}$ denote the sequence of discretized BM priors on $PGF(b_n)$.
Then there exists an absolute constant $A_0$ and a fixed constant $C_0$ depending only on the positive minimum value of $p(t)$ on $[0,1]$, the volume of $M$ and
the Riemannian metric of $M$\footnote{More precisely, the dependence of the constant $C_{II}$ (which is later defined
in~\eqref{eq:lemma:boundp:5}) on the Riemannian metric.},
such that $\Pi_n(\cdot | \{(t_i, x_i)\}_{i=1}^n)$ contracts to $f_0$ according to the rate $\epsilon_n = \sqrt{b_n/C_0} = O(n^{-1/4+\epsilon})$.
More precisely, for any $1 \leq q < \infty$
\[
\Pi_n (f: d_{q} (f, f_0) \geq A_0 \epsilon_n | \{(t_i, x_i)\}_{i=1}^n) \rightarrow 0
\]
in $P_0^n$-probability (see~\eqref{eq:p0N}) as $n \rightarrow \infty$.
\end{theorem}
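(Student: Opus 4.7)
My plan is to invoke the Ghosal--Ghosh--van der Vaart framework for posterior contraction rates (adapted to conditional density estimation in Ghosal--van der Vaart), which reduces the theorem to three classical ingredients: a prior-mass lower bound on Kullback--Leibler-type neighborhoods of the single-observation joint density $p_{f_0}(t,x)=p(t)p_{\sigma^2}(f_0(t),x)$, a sieve with controlled metric entropy, and tests separating $p_{f_0}$ from $p_f$ whenever $d_q(f,f_0)\ge A_0\epsilon_n$. The sieve will simply be the entire support $PGF(b_n)$ of $\Pi_n$; because of the discretization no further truncation is needed, which is the main structural simplification of working with $\Pi_n$ instead of the continuous BM prior.

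For the prior-mass condition, the key input is a two-sided Gaussian-type heat-kernel estimate
\[
c_1 h^{-D/2}e^{-C_1\dist_M(x,y)^2/h}\le p_h(x,y)\le c_2 h^{-D/2}e^{-C_2\dist_M(x,y)^2/h}
\]
valid uniformly for small $h$ on the compact manifold $M$ (Li--Yau/Varadhan). I would then consider the tube
\[
T_n=\{f\in PGF(b_n):\dist_M(f(kb_n),f_0(kb_n))\le \epsilon_n\text{ for every }0\le k\le 1/b_n\}.
\]
Since $f_0$ is $L$-Lipschitz, consecutive reference points of $f_0$ are within $Lb_n$, so for any $f\in T_n$ one has $\dist_M(f((k-1)b_n),f(kb_n))\le 2\epsilon_n+Lb_n$. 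With the calibration $\epsilon_n=\sqrt{b_n/C_0}$, each heat-kernel factor $p_{b_n}(f((k-1)b_n),f(kb_n))$ is then bounded below by a constant multiple of $b_n^{-D/2}$, while the Riemannian volume of each ball $B(f_0(kb_n),\epsilon_n)$ is at least $c\epsilon_n^D=cC_0^{-D/2}b_n^{D/2}$. The two powers of $b_n$ cancel, giving a per-step factor $\gamma>0$ independent of $n$ and hence $\Pi_n(T_n)\ge\gamma^{1/b_n}=e^{-|\log\gamma|/b_n}$. The companion upper heat-kernel bound, applied with time $\sigma^2$ to the log-likelihood ratio, shows that $T_n$ lies in the required Kullback--Leibler-type neighborhood, and the lower bound above exceeds $e^{-n\epsilon_n^2}$ precisely because $n\epsilon_n^2=nb_n/C_0=n^{1/2+2\epsilon}/C_0$ dominates $1/b_n=n^{1/2-2\epsilon}$.

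For the entropy, $PGF(b_n)$ is a $D(1/b_n+1)$-dimensional manifold, and a standard volume--packing argument in $M^{1/b_n+1}$ with the $d_q$ metric gives $\log N(\epsilon_n,PGF(b_n),d_q)\lesssim (D/b_n)\log(1/\epsilon_n)=O(n^{1/2-2\epsilon}\log n)$, which is safely dominated by $n\epsilon_n^2$. For the separation/testing step I would pass through the Hellinger distance on the joint density: the fixed-time heat-kernel estimates yield
\[
h\bigl(p_{\sigma^2}(f_0(t),\cdot),p_{\sigma^2}(f(t),\cdot)\bigr)^2\asymp\min\!\bigl(\dist_M(f_0(t),f(t))^2,1\bigr),
\]
and integrating against $p(t)$ shows $h(p_{f_0},p_f)^2\gtrsim c\,d_2(f,f_0)^2$. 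A Le Cam/Birg\'e test on a Hellinger ball then furnishes the required exponential error bounds; for $1\le q\le 2$ Jensen's inequality gives $d_q\le d_2$, and for $q>2$ the diameter of $M$ implies $d_q\le\operatorname{diam}(M)^{(q-2)/q}d_2^{2/q}$, so $d_2$-contraction propagates to every $d_q$ after absorbing the $q$-dependent loss into the constant $A_0$.

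The main obstacle I anticipate is the prior-mass step: one must verify that the heat-kernel lower bound is genuinely uniform in the time scale $h=b_n\to 0$ on the given compact $M$, and then check that the tube $T_n$ built around the Lipschitz reference $f_0$ really sits inside the Kullback--Leibler-type neighborhood that the rate theorem requires, with constants that do not degenerate in $n$. The delicate calibration underlying the whole argument is the choice $\epsilon_n^2=b_n/C_0$: it forces the Gaussian normalization $b_n^{-D/2}$ to cancel the ball volume $\epsilon_n^D$ at each step exactly, so the cumulative prior-mass exponent is only $O(1/b_n)$ rather than larger, and this precise matching is what pins down the $O(n^{-1/4+\epsilon})$ contraction rate.
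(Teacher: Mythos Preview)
Your plan is largely correct and in some respects simpler than the paper's own argument. Both proofs invoke the Ghosal--Ghosh--van der Vaart rate theorem, but the paper does \emph{not} take the full support $PGF(b_n)$ as its sieve: it introduces a H\"older sieve $\PP_{n,\alpha}=\{f:\|f\|_\alpha\le M_n\}$, bounds its $d_\infty$-entropy via a careful counting argument (Lemmas~\ref{lemma:coverbound}--\ref{lemma:function1}), and then separately verifies the remaining-mass condition $\Pi_n(\PP\setminus\PP_{n,\alpha})\le e^{-(C+4)n\epsilon_n^2}$ using heat-kernel tail estimates (Lemmas~\ref{lemma:density2}--\ref{lemma:function2}). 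Your observation that $PGF(b_n)$ itself already has log-entropy $O((D/b_n)\log(1/\epsilon_n))=o(n\epsilon_n^2)$ under the stated calibration makes the sieve construction and the remaining-mass step unnecessary, which is a genuine simplification. Your prior-mass computation is also sharper: you implicitly use the quadratic bound on the KL divergence and its second moment by $d_\infty(f_0,f)^2$ (valid because $p_{\sigma^2}$ is bounded above and below on compact $M$), whereas the paper's Lemma~\ref{lemma:pupper} records only a linear bound, forcing a tube of radius $\asymp\epsilon_n^2$ in Lemma~\ref{lemma:approx} rather than $\epsilon_n$. Both routes land on the same calibration $b_n\asymp\epsilon_n^2$.

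There is, however, a real gap in your final step for $q>2$. The interpolation $d_q\le\mathrm{diam}(M)^{(q-2)/q}d_2^{2/q}$ together with $d_2\lesssim\epsilon_n$ yields only $d_q\lesssim\epsilon_n^{2/q}$; the factor $\epsilon_n^{2/q-1}\to\infty$ cannot be ``absorbed into the constant $A_0$'', so as written your argument does not deliver the rate $\epsilon_n$ in $d_q$ when $q>2$. The paper avoids this loss by proving, for each fixed $q$, a direct two-sided comparison
\[
C_0\,d_q(f_1,f_2)\le d_{q,\DD}(p_{f_1},p_{f_2})\le C_1\,d_\infty(f_1,f_2)
\]
(Lemma~\ref{lemma:boundp}); the lower bound is a non-degeneracy statement for the heat kernel, established by a compactness-plus-contradiction argument, and is exactly the source of the constant $C_{II}$ flagged in the theorem's footnote. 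With this lemma in hand one runs the entropy and testing conditions in the $L_q$ density metric itself and then pulls the rate back to $d_q$ without any exponent loss. To complete your program for all $1\le q<\infty$ you would need to replace the interpolation step by an analogue of this lemma rather than bootstrapping from $d_2$.
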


The proof of Theorem~\ref{theorem:rate} appears in Section~\ref{sec:proof:dbm} and utilizes a general strategy for establishing contraction according to~\cite{ghosal2000}.
The significance of the theorem is in properly determining the sidelength parameter $b_n$ (as a function of $n$).
Practical application of the discretized BM prior can suffer from underfitting or overfitting as a result of too small or too large choice of $b_n$ respectively. Theorem~\ref{theorem:rate} implies that for $n$ observations, $b_n$ should be picked as $n^{-1/2+2\epsilon}$ to achieve a contraction rate of $O(n^{-1/4+\epsilon})$ for any fixed $\epsilon>0$.


\subsubsection{Posterior Consistency for Continuous BM}
\label{sec:theorems_continuous}

We show here that the posterior distribution $\Pi(\pmb{\cdot} | \{(t_i, x_i)\}_{i=1}^n )$ is weakly consistent.
In order to clearly specify the weak convergence, it is natural to identify functions in $\PP$ with density functions of observations.
Let $\DD$ denote the set of densities $p(t,x)$ from which the observations $\{(t_i, x_i)\}_{i=1}^n \subset [0,1] \times M$ are drawn. Assuming a fixed variance $\sigma^2$, a function $f\in \PP$ can be identified with a density function $p_f\in \DD$ as follows:
\begin{equation}\label{map:contden}
\displaystyle \Phi: \quad f \longrightarrow p_f (t,x) := p_{\sigma^2}(f(t), x)p(t).
\end{equation}
Therefore, $\Pi$ induces a prior on the set $\DD$, which is again denoted by $\Pi$  with some abuse of notation. For the simplicity of analysis, we assume here that $\sigma^2$ is known. Section~\ref{sec:unknown_sigma} discusses the modification needed when $\sigma^2$ is unknown.

For the underlying function $f_0$, we define its weak neighborhood of radius $\epsilon$ by
\[
\displaystyle N_{\epsilon} (f_0) = \left\{ f \in \PP: \left| \int_{[0,1]\times M} p_g p_f dtd\mu(x) -  \int_{[0,1]\times M} p_g p_{f_0} dtd\mu(x) \right|\leq \epsilon,\, \forall g\in  \PP \right\}.
\]
Theorem~\ref{theorem:wcbm} states the weak posterior consistency of the continuous BM prior $\Pi$. It is proved later in Section~\ref{sec:proof:cbm}.
\begin{theorem}\label{theorem:wcbm}
If $M$ is a compact Riemannian manifold and if the true underlying function $f_0\in \PP$ of the regression model is Lipschitz continuous,
then the posterior distribution $\Pi(\pmb{\cdot} | \{(t_i, x_i)\}_{i=1}^n )$ is weakly consistent. In other words, for any $\epsilon>0$,
\[
\Pi( N_{\epsilon}(f_0) | \{(t_i, x_i)\}_{i=1}^n ) \longrightarrow 1
\]
almost surely w.r.t. the true probability measure $P_0^n$ (defined in~\eqref{eq:p0N}) as $n \rightarrow \infty$.
\end{theorem}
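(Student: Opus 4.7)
The plan is to invoke Schwartz's theorem, which reduces weak posterior consistency to verifying the Kullback-Leibler support property: for every $\eta > 0$,
\[
\Pi\left(f \in \PP : \int_{[0,1]\times M} p_{f_0}(t,x)\log\frac{p_{f_0}(t,x)}{p_f(t,x)}\,dt\,d\mu(x) < \eta\right) > 0.
\]
The topology generated by the neighborhoods $N_\epsilon(f_0)$ is compatible with the weak topology on $\DD$: the test functions $p_g$ are uniformly bounded by $C_\sigma := \|p\|_\infty\,\sup_{y,x}p_{\sigma^2}(y,x)$, so $|\int p_g(p_f - p_{f_0})\,dt\,d\mu| \leq C_\sigma\|p_f - p_{f_0}\|_1$ uniformly in $g$, and hence $N_\epsilon(f_0)$ contains an $L^1$-neighborhood of $p_{f_0}$. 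Standard posterior consistency machinery (Schwartz together with Pinsker-type bounds linking $L^1$ to $\mathrm{KL}$) then closes the translation step once $\mathrm{KL}$ support is in hand.

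The essential step is therefore to establish $\mathrm{KL}$ support. I would first reduce it to a statement about uniform proximity of paths. Since $p_{\sigma^2}(\cdot,x)$ is smooth on the compact manifold $M$ and bounded below by a positive constant, a routine calculation (Taylor expansion plus a $\chi^2$-type upper bound on $\mathrm{KL}$) yields
\[
\int p_{f_0}\log(p_{f_0}/p_f)\,dt\,d\mu(x) \leq C_{\sigma,M}\,\sup_{t\in[0,1]}\dist_M(f_0(t),f(t))^2.
\]
Hence it suffices to show that for every $\delta > 0$,
\[
\Pi\left(\sup_{t\in[0,1]}\dist_M(f_0(t),f(t)) < \delta\right) > 0.
\]

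Conditioning on the starting point $f(0)$, which is distributed as $\mu/\mu(M)$, the event $\dist_M(f(0),f_0(0)) < \delta/2$ has positive probability by compactness of $M$. Conditional on $f(0) = x$ in this small neighborhood, I would invoke a support theorem for Brownian motion on $M$: $W_x$ charges a uniform neighborhood of any Lipschitz path starting at $x$, because Lipschitz paths have finite energy and lie in the topological support of Brownian motion. Applying this to a path running from $x$ to near $f_0(1)$ while staying within $\delta/2$ of $f_0$ (for instance, a geodesic shift of $f_0$ by the initial offset) and integrating over $x$ via Fubini delivers the required positive prior mass.

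The main obstacle is this manifold-level support statement. The cleanest route is via the Stroock-Varadhan support theorem applied to the stochastic development SDE on the orthonormal frame bundle: Brownian motion on $M$ is the projection of the horizontal lift of Euclidean BM, and since the horizontal vector fields span the horizontal tangent space at every frame, the topological support of the law (in the uniform topology) equals the closure of the family of curves driven by smooth controls, which contains all Lipschitz paths. A more hands-on alternative is to partition $[0,1]$ into short subintervals on which $f_0$ is nearly a geodesic, work in normal coordinate charts, and apply small-ball estimates for Euclidean BM together with uniform bounds on the Christoffel correction terms. Either route is technical but classical, and once in hand it closes the $\mathrm{KL}$-support verification, after which Schwartz's theorem delivers the claimed almost-sure weak consistency.
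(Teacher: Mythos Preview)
Your proposal is correct and follows the same overall architecture as the paper: invoke Schwartz's theorem, reduce the Kullback--Leibler support condition to positive prior mass on $d_\infty$-balls around $f_0$ via the smoothness and strict positivity of the heat kernel, and then verify that the Brownian-motion prior charges every such ball. The paper carries out exactly these three steps (its Lemma~\ref{lemma:pupper} is your $\mathrm{KL}\lesssim d_\infty$ bound).

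The only genuine difference is in how the support property is established. You appeal to the Stroock--Varadhan support theorem for the horizontal SDE on the frame bundle (or, as a fallback, chart-by-chart Euclidean small-ball estimates). The paper instead partitions $[0,1]$ into a fine grid, conditions on the values $f(iT)$ lying in small balls around $f_0(iT)$, and then uses an explicit Brownian-bridge large-deviation estimate on compact manifolds (from Hsu's 1990 paper) to show that each bridge stays in the tube with positive probability; the product of these bridge probabilities, integrated against the positive heat-kernel transition densities at the grid points, gives the required positive mass. Your Stroock--Varadhan route is cleaner and more conceptual, but it is a black box; the paper's bridge decomposition is more hands-on and would in principle yield quantitative small-ball lower bounds, though those are not exploited here. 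Your second alternative (normal-coordinate charts plus Euclidean small balls) is close in spirit to the paper's argument, though the paper works intrinsically with manifold Brownian bridges rather than pulling back to charts.
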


\subsection{Main Contributions of This Work}

The first contribution of this paper is the proposal of a natural model for manifold-valued regression (with real-valued predictors). 
Indeed, the heat kernel on the Riemannian manifold gives rise to an averaging process, which generalizes basic averages of vector-valued regression.
In particular, the heat kernel on $\R^D$ is the same as the Gaussian kernel (applied to the difference of $f(t)$ and $x$), which is widely used in regression when $x \in \R^D$ (due to
an additive Gaussian noise model).
The Bayesian setting is natural for the proposed model, since it uses the discretized
or continuous Brownian motion on $M$ as a prior distribution of $f$ and it does not directly use the heat kernel.
It is not hard to simulate the Brownian motion, but tight estimates of the heat kernel for general $M$ are hard.

The second and main contribution of this work is the derivation of the contraction rate of the posterior distribution for the discretized Brownian motion. To the best of our knowledge the rate of contraction was only established before for regression with real-valued predictors and responses. For this case, van Zanten~\cite{MR2791382} established contraction rate $n^{-1/4}$ for the posterior distribution of $n$ samples under the $L_p$-norm, where $1\leq p<\infty$. His analysis does not seem to extend to our setting. It is unclear to us if this stronger contraction rate also applies to the general case of manifold-valued regression (see discussion in Section~\ref{sec:better_contraction}).

The third contribution is the consistency result for the continuous Brownian motion.
The only other consistency result for manifold-valued regression we are aware of is by Bhattacharya~\cite{Bhattacharya:2012:NIM:2331106}. It suggests a general nonparametric Bayesian kernel-based framework for modeling the conditional distribution $x|t$, where the predictor $t$ and response $x$ take values in metric spaces with kernels. Under a suitable assumption on the kernels, \cite{Bhattacharya:2012:NIM:2331106} established the posterior consistency for the conditional distribution w.r.t.~the $L_1$ norm (see~\cite[Proposition~13.1]{Bhattacharya:2012:NIM:2331106}). We remark that \cite{Bhattacharya:2012:NIM:2331106} applies to responses and predictors in Riemannian manifolds (where the corresponding metric kernels are the heat kernels).
However, both the conditional distribution (of $x$ given $t$) and the prior distribution are different than the ones proposed here.
It is unclear how to obtain a rate of contraction for~\cite{Bhattacharya:2012:NIM:2331106}.

The last contribution is the implication of a new numerical procedure for manifold-valued regression, which is based on simulating a Brownian motion on $M$.
The flexibility of the shapes of the sample paths of the Brownian motion is advantageous over state-of-the-art geodesic regression methods. Real applications often do not give rise to geodesics and thus the nonparametric regression method is less likely to suffer from underfitting. Another nonparametric approach is kernel regression~\cite{hein2009robust, Bhattacharya:2012:NIM:2331106}. In Section~\ref{sec:exp}, we compare between kernel regression and Brownian motion regression (our method) for a particular example, which is easy to visualize.

\subsection{Organization of the Rest of the Paper}
The paper is organized as follows. Theorems~\ref{theorem:rate} and~\ref{theorem:wcbm} are proved in Sections~\ref{sec:proof:dbm} and~\ref{sec:proof:cbm} respectively. Section~\ref{sec:extension} extends the framework to the cases where $\sigma^2$ is unknown and $p(t)$ is supported on a subset of $[0,1]$. Section~\ref{sec:exp} demonstrates the performance of the proposed procedure on a particular example, which is easy to visualize, and
compares it to kernel regression~\cite{hein2009robust, Bhattacharya:2012:NIM:2331106}.

\section{Proof of Theorem~\ref{theorem:rate}}\label{sec:proof:dbm}

Our proof utilizes Theorem~2.1 of \cite[page 4]{ghosal2000}. The latter theorem establishes the contraction rate for a sequence of priors $\Pi_n$ over the set $\DD$ of joint densities of the predictor $t$ and response $x$ under some conditions on $\Pi_n$ and the covering number of $\DD$. We thus conclude Theorem~\ref{theorem:rate} by establishing these conditions.

We use the following distance $d_{q, \DD}$ on the space $\DD$ with an arbitrarily fixed $1 \leq q < \infty$:
\[
\displaystyle d_{q, \DD}(p_1, p_2) = \frac{1}{2}\|p_1-p_2\|_q \quad \text{for } p_1,p_2\in \DD.
\]
The regression framework is formulated in terms of the space $\PP$ (see Section~\ref{sec:setting}, in particular, the mapping of $\PP$ to $\DD$ in~\eqref{map:contden}) and the metric $d_{q}$ on $\PP$ (see~\eqref{equ:distdefqp}).
We also use the $d_{\infty}$ metric on $\PP$, which is defined by
\begin{equation}\label{equ:distdef}
\displaystyle d_{\infty} (f_1, f_2) = \max_{t \in [0,1] } \dist_M (f_1(t), f_2(t)),
\end{equation}

The proof is organized as follows. Section~\ref{sec:boundp} shows that under the mapping~\eqref{map:contden} of $\PP$ to $\DD$, $d_{q, \DD}$ is bounded from below by $d_{q}$
(and above by $d_{\infty}$). Therefore, the posterior contraction w.r.t.~$d_{q, \DD}$ implies the posterior contraction w.r.t.~$d_{q}$. Then, Sections~\ref{sec:density1}-\ref{sec:density3} show that if the sidelengths $\{b_n\}_{n \in \N}$ and a constant $\alpha>0$ are chosen properly, then the priors $\{\Pi_n\}_{n \in \N}$ and the sieve of functions $\{\PP_{n,\alpha}\}_{n \in \N}$ (defined later in~\eqref{eq:sieve}) satisfy conditions (2.2)-(2.4) respectively in Theorem~2.1 of~\cite{ghosal2000}. The posterior contraction of $\Pi_n$ is then concluded.

\subsection{Relations between $d_{q, \DD}$, $d_{q}$ and $d_{\infty}$}
\label{sec:boundp}
We formulate and prove the following lemma, which relates between $d_{q, \DD}$,  $d_{q}$ and $d_{\infty}$.
It is later used as follows: The first inequality of~\eqref{eq:lemma:boundp} deduces $L_q$ convergence in $\PP$ from $L_q$ convergence in $\DD$. The second inequality of~\eqref{eq:lemma:boundp} is used in finding the covering number of the space $\DD$.

\begin{lemma}\label{lemma:boundp}
If $0<m_p$, $M_p \in \R$ and $m_p \leq p(t) \leq M_p$ for all $t\in [0,1]$, then there exists two constants $C_0, C_1>0$ depending only on $m_p$, $M_p$ and the Riemannian manifold $M$
such that for any $f_1, f_2 \in \PP$ with corresponding densities $p_{f_1}$, $p_{f_2}$ in $\DD$ (via~\eqref{map:contden})
\begin{equation}\label{eq:lemma:boundp}
C_0 d_{q} (f_1, f_2) \leq d_{q, \DD} (p_{f_1}, p_{f_2}) \leq C_1 d_{\infty} (f_1, f_2).
\end{equation}
\end{lemma}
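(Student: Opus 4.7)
The plan is to prove the two inequalities in \eqref{eq:lemma:boundp} separately, both of them reducing to quantitative control of the map $y \mapsto p_{\sigma^2}(y,\cdot) \in L^q(M)$ for $y \in M$.

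For the upper bound I would first write the integrand
$$p_{f_1}(t,x) - p_{f_2}(t,x) = p(t)\bigl[p_{\sigma^2}(f_1(t),x) - p_{\sigma^2}(f_2(t),x)\bigr].$$
Since $M$ is compact and $\sigma^2>0$ is fixed, the heat kernel $p_{\sigma^2}$ is smooth on $M \times M$, so $|\nabla_y p_{\sigma^2}(y,x)|$ is bounded by some constant $C_I$ uniformly in $(y,x)$. This yields the pointwise estimate $|p_{f_1}(t,x) - p_{f_2}(t,x)| \leq M_p C_I d_\infty(f_1,f_2)$, and integrating over $[0,1] \times M$ and taking $q$-th roots gives $\|p_{f_1}-p_{f_2}\|_q \leq M_p C_I \mu(M)^{1/q} d_\infty(f_1,f_2)$, which is the desired upper bound after absorbing constants into $C_1$.

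For the lower bound I would use the same factorization and focus on bounding the inner integral in $x$ from below. The key claim is a reverse Lipschitz estimate: there exists $C_{II}>0$ depending only on $M$ (and the fixed $\sigma^2$) such that
$$\|p_{\sigma^2}(y_1,\cdot) - p_{\sigma^2}(y_2,\cdot)\|_{L^q(M)} \geq C_{II}\,\dist_M(y_1,y_2) \quad \text{for all } y_1,y_2 \in M.$$
Granting this, and using $p(t)^q \geq m_p^{q-1} p(t)$ (valid for $q \geq 1$ since $p(t) \geq m_p$), I can compute
$$\|p_{f_1}-p_{f_2}\|_q^q \geq C_{II}^q m_p^{q-1} \int_0^1 \dist_M(f_1(t),f_2(t))^q p(t)\,dt = C_{II}^q m_p^{q-1}\,d_q(f_1,f_2)^q,$$
which delivers the lower bound with $C_0 = \tfrac{1}{2}C_{II}\,m_p^{(q-1)/q}$.

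The main obstacle is proving the reverse Lipschitz estimate, for which I would split into two regimes on the compact product $M \times M$. Near the diagonal I would linearize $p_{\sigma^2}(y_1,x) - p_{\sigma^2}(y_2,x) \approx \langle \nabla_y p_{\sigma^2}(y_2,x), v\rangle\,\dist_M(y_1,y_2)$ and argue that $\|\langle \nabla_y p_{\sigma^2}(y,\cdot), v\rangle\|_{L^q(M)}$ is strictly positive for every $y \in M$ and every unit $v \in T_y M$; this is really where the estimate earns its keep, and it can be justified by expanding the heat kernel in the Laplace eigenbasis $\{\phi_i\}$ and noting that the gradients $\{\nabla \phi_i(y)\}_i$ span $T_y M$ at every point. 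Away from the diagonal, injectivity of $y \mapsto p_{\sigma^2}(y,\cdot)$ (from strict positivity and real-analyticity of the heat kernel in the base point) together with continuity in $L^q(M)$ make the ratio $\|p_{\sigma^2}(y_1,\cdot) - p_{\sigma^2}(y_2,\cdot)\|_q / \dist_M(y_1,y_2)$ continuous and strictly positive on a compact set, hence bounded below. Patching the two regimes via a continuity/compactness argument produces a uniform $C_{II}$ and completes the proof.
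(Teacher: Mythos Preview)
Your approach is essentially the same as the paper's: both prove the upper bound via smoothness of $p_{\sigma^2}$ on the compact $M\times M$, and both prove the lower bound by a reverse-Lipschitz estimate split into a near-diagonal regime (linearize and show the directional derivative $\partial_v p_{\sigma^2}(y,\cdot)$ is nonzero in $L^q$) and a far-from-diagonal regime (injectivity of $y\mapsto p_{\sigma^2}(y,\cdot)$ plus compactness).

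The one substantive difference is in how injectivity is justified. Your appeal to real-analyticity of the heat kernel in the base point is not available for a merely smooth Riemannian metric, so as stated this step has a gap in the generality the lemma claims. The paper supplies a self-contained argument that avoids analyticity: if $p_{\sigma^2}(x_1,\cdot)\equiv p_{\sigma^2}(x_2,\cdot)$, evaluating at $x_1,x_2$ gives $p_{\sigma^2}(x_1,x_2)=\sqrt{p_{\sigma^2}(x_1,x_1)\,p_{\sigma^2}(x_2,x_2)}$, which is the equality case of Cauchy--Schwarz applied to the semigroup identity $p_{\sigma^2}(x_1,x_2)=\int p_{\sigma^2/2}(x_1,z)\,p_{\sigma^2/2}(z,x_2)\,d\mu(z)$; hence $p_{\sigma^2/2}(x_1,\cdot)\equiv p_{\sigma^2/2}(x_2,\cdot)$, and iterating sends the time to $0$ and yields $\delta_{x_1}=\delta_{x_2}$. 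On the other hand, your eigenfunction argument for non-degeneracy of $\nabla_y p_{\sigma^2}(y,\cdot)$ is more explicit than the paper's corresponding step, which simply invokes that the heat kernel is nonconstant together with compactness of the unit tangent bundle.
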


\begin{proof}
For $x_1\not= x_2$, we define the function
\begin{equation}
\label{eq:def_F}
\displaystyle F(x_1, x_2,y)=\frac{\left| p_{\sigma^2} (x_1, y)-p_{\sigma^2}(x_2,y)\right|}{\dist_M (x_1,x_2)}.
\end{equation}
We note that the first inequality of~\eqref{eq:lemma:boundp} is true if there exists a constant $C_0>0$ such that
\begin{equation}\label{eq:lemma:boundp:1}
\displaystyle \int_{y\in M} F(x_1,x_2,y)^q d\mu(y) \geq \frac{C_0^q}{m^{q-1}_p},\quad \forall x_1\not=x_2\in M.
\end{equation}

Since $M$ is compact and $p_{\sigma^2}(x,y)$ is infinitely differentiable, for any $\epsilon>0$, there exists $\delta>0$ such that
\begin{equation}\label{eq:lemma:boundp:2}
\displaystyle \left| F(x_1,x_2,y) - \left|\frac{\partial p_{\sigma^2}(x_1, y)}{\partial v_{12}}\right| \right| \leq \epsilon, \quad \forall x_1,x_2,y\in M,\,\dist_M(x_1, x_2)\leq \delta,
\end{equation}
where $v_{12}\in T_{x_1} M$ is the unit vector of the geodesic connecting $x_1$ and $x_2$. Since the heat kernel $p_{\sigma^2}(x_1, y)$ is not constant and due to the compactness of the space of unit tangent vectors, there exists $C'_0>0$ such that
\begin{equation}\label{eq:lemma:boundp:3}
\displaystyle \int_{y\in M}\left|\frac{\partial p_{\sigma^2}(x_1, y)}{\partial v_{12}}\right| d\mu(y) \geq C'_0, \quad \forall x_1\in M, v_{12}\in T_{x_1} M, \|v_{12}\|=1.
\end{equation}
Inequalities~\eqref{eq:lemma:boundp:2}, \eqref{eq:lemma:boundp:3} and the Schwarz inequality imply that
\begin{equation}\label{eq:lemma:boundp:4}
\displaystyle \int_{y\in M} F(x_1,x_2,y)^q d\mu(y) \geq C_I:=\left(\frac{C'_0-\epsilon \mu(M)}{\mu(M)}\right)^q, \,\, \forall x_1,x_2,y\in M,\, \dist_M(x_1,x_2)\leq \delta.
\end{equation}
If we pick $\epsilon$ small enough (with its $\delta$ in \eqref{eq:lemma:boundp:2}), $C_I$ is a positive number. On the other hand, if the pair $(x_1,x_2)$ satisfies that $\dist_M(x_1,x_2)\geq \delta$, we show that for some constant $C_{II}>0$,
\begin{equation}\label{eq:lemma:boundp:5}
\displaystyle \int_{y\in M} F(x_1,x_2,y)^q d\mu(y) \geq C_{II}, \,\, \forall x_1,x_2,y\in M,\, \dist_M(x_1,x_2)\geq \delta.
\end{equation}
Since the set $\{(x_1,x_2) | \dist_M (x_1,x_2)\geq \delta\}$ is compact, the existence of $C_{II}$ is guaranteed if we can show that
$$
\displaystyle \int_{y\in M} F(x_1,x_2,y)^q d\mu(y) >0, \,\, \forall x_1,x_2,y\in M,\, \dist_M(x_1,x_2)\geq \delta,
$$
which can further be reduced to showing that given any pair $(x_1,x_2) \in M^2$,
\begin{equation}\label{eq:lemma:boundp:6}
\exists y\in M, \quad p_{\sigma^2}(x_1,y) \not= p_{\sigma^2}(x_2,y).
\end{equation}
We prove~\eqref{eq:lemma:boundp:6} by contradiction. If~\eqref{eq:lemma:boundp:6} is not true, then
\begin{equation}\label{eq:lemma:boundp:7}
p_{\sigma^2}(x_1,y) = p_{\sigma^2}(x_2,y), \quad \forall y\in M.
\end{equation}
If we plug $y=x_1$ and $y=x_2$ respectively in~\eqref{eq:lemma:boundp:7}, and use the symmetry of the heat kernel, to get $p_{\sigma^2}(x_1, x_1) = p_{\sigma^2}(x_1, x_2) = p_{\sigma^2}(x_2, x_2)$, which means that
\begin{equation}\label{eq:lemma:boundp:8}
p_{\sigma^2}(x_1, x_2) = \sqrt{p_{\sigma^2}(x_1, x_1) p_{\sigma^2}(x_2, x_2)}.
\end{equation}
On the other hand,
\begin{equation}\label{eq:lemma:boundp:9}
\begin{aligned}
p_{\sigma^2}(x_1, x_2) &= \int_{z\in M} p_{\sigma^2/2}(x_1, z)p_{\sigma^2/2}(z, x_2) d\mu(z) \\
 &\leq  \sqrt{\int_{z\in M} p_{\sigma^2/2}(x_1, z)^2 d\mu(z) \int_{z\in M} p_{\sigma^2/2}(z, x_2)^2 d\mu(z)} \\
& = \sqrt{p_{\sigma^2}(x_1, x_1) p_{\sigma^2}(x_2, x_2)}.
\end{aligned}
\end{equation}
In view of~\eqref{eq:lemma:boundp:8} the Cauchy-Schwartz inequality used in~\eqref{eq:lemma:boundp:9} is an equality and consequently
$$
p_{\sigma^2/2}(x_1, z) = p_{\sigma^2/2}(x_2, z), \quad \forall z\in M.
$$
Applying the same argument iteratively, we conclude that for any $m>0$,
$$
p_{\sigma^2/2^m}(x_1, z) = p_{\sigma^2/2^m}(x_2, z), \quad \forall z\in M.
$$
However, as $m\rightarrow \infty$, $p_{\sigma^2/2^m}(x_1, z) \rightarrow \delta_{x_1}$ but $p_{\sigma^2/2^m}(x_2, z) \rightarrow \delta_{x_2}\not=\delta_{x_1}$. This is a contradiction. Inequality~\eqref{eq:lemma:boundp:6} and thus~\eqref{eq:lemma:boundp:5} are proved.
We conclude from~\eqref{eq:lemma:boundp:4} and~\eqref{eq:lemma:boundp:5}, the first inequality of~\eqref{eq:lemma:boundp} with $C_0=\left(\min(C_I, C_{II})m^{q-1}_p\right)^{1/q}$.

Next, we establish the second inequality of~\eqref{eq:lemma:boundp}. Theorem~4.1.4 in~\cite[page 105]{sam} states that $p_{\sigma^2}(x, y)$ is infinitely differentiable in both variables $x$ and $y$. In particular, its first partial derivatives are continuous. Furthermore, the fact that $M$ is compact implies that the first partial derivatives are bounded. That is, there exists $C_M >0$ such that
\[
\displaystyle \left|\frac{\partial p_{\sigma^2}(x, y)}{\partial x}\right| \leq C_M, \quad \left|\frac{\partial p_{\sigma^2}(x, y)}{\partial y}\right| \leq C_M.
\]
Consequently,
\begin{equation}
\label{eq:good_recovery}
\displaystyle | p_{\sigma^2}(x_1, y) - p_{\sigma^2}(x_2, y)| \leq C_M \dist_M (x_1, x_2).
\end{equation}
Applying~\eqref{eq:good_recovery} and then bounding $p(t)$ by $M_p$ and $\dist_M$ by $d_{\infty}$, we conclude~\eqref{eq:lemma:boundp:2} with $C_1= C_M M_p^{(q-1)/q} \mu(M)$ as follows:
\begin{equation}
\begin{aligned}
\nonumber
\displaystyle d_{q,\DD}(p_{f_1}, p_{f_2}) &= \left(\iint \left| p_{\sigma^2}(f_1(t), y)p(t) - p_{\sigma^2}(f_2(t), y)p(t) \right|^q  d\mu(y)dt\right)^{1/q} \\
\nonumber
 &\leq \left(\iint C_M^q \dist_M (f_1(t), f_2(t))^q p(t)^q d\mu(y)dt\right)^{1/q} \\
 &\leq  C_M M_p^{(q-1)/q} \mu(M) d_{\infty}(f_1, f_2).
\end{aligned}
\end{equation}
\end{proof}

\begin{remark}
We note that when $q=1$, the constants $C_0, C_1$ in Lemma~\ref{lemma:boundp} are independent of $p(t)$. In particular, in this case the condition $m_p \leq p(t) \leq M_p$ is not needed.
\end{remark}

\subsection{Verification of Inequality~2.2 of~\cite{ghosal2000}}\label{sec:density1}
We estimate the covering numbers of special subsets of $\PP$ and $\DD$. The final estimate verifies inequality~2.2 of~\cite{ghosal2000}. We start with some notation and definitions that also include these special subsets of $\PP$ and $\DD$.

For $0 < \alpha \leq 1$ and $f \in \PP$, let
\[
\displaystyle \|f\|_{\alpha}:=\max_{t_1 ,t_2 \in [0,1]} \frac{\dist_M(f(t_1),f(t_2))}{|t_1-t_2|^{\alpha}}
\]
and
$$\PP_{\alpha}:=\{f\in \PP | \, \|f\|_{\alpha} <\infty\}.$$
For a sequence $\{M_n\}_{n \in \N}$ increasing to infinity
we define the sieve of functions
\begin{equation}\label{eq:sieve}
\PP_{n,\alpha}=\{f\in \PP_{\alpha} | \, \|f\|_{\alpha}\leq M_n \}.
\end{equation}
This induces a sieve of densities $\DD_{n,\alpha}$ of $\DD_{\alpha}$ by the map~\eqref{map:contden}.
For $\epsilon>0$ and a metric space $\EE$ with the metric $d$, we denote by
$N(\epsilon, \EE, d)$ the $\epsilon$-covering number of $\EE$, which is the minimal number of balls of radius $\epsilon$ needed to cover $\EE$.

In the rest of the section we estimate the covering numbers of the sets $M$, $\PP_{n,\alpha}$ and
$\DD_{n,\alpha}$.
We assume a decreasing sequence $\epsilon_n$ approaching zero.
Section~\ref{sec:eps_net_M} upper bounds $N(\epsilon_n, M, \dist_M)$ for an arbitrary such sequence $\epsilon_n$. Section~\ref{sec:eps_net_P} upper bounds $N(\epsilon_n, \PP_{n,\alpha}, d_{\infty})$ for arbitrary sequences $\epsilon_n$ and $M_n$ as above.
At last, Section~\ref{sec:eps_net_D} upper bounds $N(\epsilon_n, \DD_{n,\alpha}, d_{q, \DD})$
for sequences $\epsilon_n$ and $M_n$ satisfying an additional condition (see~\eqref{equ:condition1} below). It verifies inequality~2.2 of~\cite{ghosal2000}.

\subsubsection{Covering Numbers of $M$}
\label{sec:eps_net_M}
For any $\epsilon_n>0$, we construct an $\epsilon_n$-net on the $D$-dimensional compact Riemannian manifold $M$. Let $\text{D}(M)$ be the diameter of $M$. That is,
\[
\displaystyle \text{D}(M)=\max_{x,y\in M} \dist_M(x,y).
\]
The Nash embedding theorem~\cite{Nash54} and Whitney embedding theorem~\cite{Whitney44} imply that there exists an isometric map
\[
E : M \longrightarrow \R^{2D}.
\]
Since $\text{D}(E(M)) \leq \text{D}(M)$, the image $E(M)$ is contained in an hypercube $HC$ with side length $2\text{D}(M)$. We partition this $HC$ as a regular grid with grid spacing $\epsilon_n/\sqrt{2D}$ in each direction. Since each point in $HC$ has distance less than $\epsilon_n$ to some grid vertex, the set of grid vertices, $GV(\epsilon_n)$, is an $\epsilon_n$-net of $HC$. Thus the $\epsilon_n$-covering number of $HC$ can be bounded as follows:
\begin{equation}\label{equ:cover}
\displaystyle N(\epsilon_n, HC, \dist_{\R^{2D}}) \leq \left( \frac{2\text{D}(M)}{\epsilon_n/\sqrt{2D}} \right)^{2D}.
\end{equation}

Next, we construct an $\epsilon_n$-net of $M$ using the $\epsilon_n/3$-net $GV(\epsilon_n/3)$ of $HC$. To begin with, we show in Lemma~\ref{lemma:distance} that the Riemannian distance and the Euclidean distance are equivalent locally under an isometric embedding.

\begin{lemma}\label{lemma:distance}
Let $M$ be a compact Riemannian manifold and $E$ be an isometric embedding to $\R^{2D}$. Then for any fixed constant $C > 0$, there exists a constant $\delta_C > 0$ such that $\forall x,y\in M$ with $\dist_{\R^{2D}} (E(x),E(y)) < \delta_C$,
\[
| \dist_M (x,y) - \dist_{\R^{2D}} (E(x), E(y)) | < C \dist_{\R^{2D}} (E(x), E(y)).
\]
\end{lemma}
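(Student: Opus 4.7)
The plan is to combine two standard facts about isometric embeddings of compact Riemannian manifolds with a uniform continuity argument. First, an isometric embedding $E$ preserves the length of any smooth curve, which immediately gives the one-sided bound $\dist_{\R^{2D}}(E(x),E(y)) \leq \dist_M(x,y)$. Second, on sufficiently small scales the chord length approximates the geodesic length with cubic error, uniformly in the base point, so the two distances are comparable with ratio arbitrarily close to $1$. The hypothesis is stated in terms of Euclidean distance rather than Riemannian distance, so a third step is needed to translate one to the other.

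For the easy direction, let $\gamma$ be a minimizing geodesic in $M$ from $x$ to $y$. Since $E$ is a Riemannian isometric embedding, the length of $E\circ\gamma$ in $\R^{2D}$ equals the length of $\gamma$, namely $\dist_M(x,y)$, and this is at least the Euclidean chord length $\dist_{\R^{2D}}(E(x),E(y))$.

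For the reverse direction I would use a Taylor expansion in normal coordinates. Let $r_0>0$ be the injectivity radius of $M$, which is positive by compactness. For $x\in M$ and a unit vector $v\in T_xM$, set $F(s) = E(\exp_x(sv))$. The isometric property forces $\|F'(0)\|=1$ and (by differentiating $\|F'(s)\|^2\equiv 1$) $F'(0)\cdot F''(0)=0$, so expanding $\|F(s)-F(0)\|^2$ around $s=0$ shows that the chord length differs from the arc length $s$ only at order $s^3$, with leading coefficient controlled by the second fundamental form $II_x$ of $E(M)$ at $E(x)$ and the higher covariant derivatives of $E$. Compactness of $M$ bounds all these quantities uniformly in $x$ and $v$, so there exist constants $K>0$ and $\eta_0\in(0,r_0)$, independent of $x$, with
\[
\dist_M(x,y) - \dist_{\R^{2D}}(E(x),E(y)) \;\leq\; K\,\dist_M(x,y)^3 \quad \text{whenever } \dist_M(x,y) < \eta_0.
\]
Given $C>0$, choosing $\eta_C\in(0,\eta_0)$ with $K\eta_C^2 \leq C/(1+C)$ yields $\dist_M(x,y) \leq (1+C)\dist_{\R^{2D}}(E(x),E(y))$ whenever $\dist_M(x,y) < \eta_C$.

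It remains to translate the hypothesis $\dist_{\R^{2D}}(E(x),E(y)) < \delta_C$ into $\dist_M(x,y) < \eta_C$. Since $M$ is compact and $E$ is a continuous injection, $E:M\to E(M)$ is a homeomorphism, so $E^{-1}$ is uniformly continuous on $E(M)$, and such a $\delta_C$ exists. Combining the three steps gives
\[
0 \;\leq\; \dist_M(x,y) - \dist_{\R^{2D}}(E(x),E(y)) \;\leq\; C\,\dist_{\R^{2D}}(E(x),E(y)),
\]
which is the desired inequality. The main obstacle is ensuring that the constant $K$ in the Taylor step is genuinely uniform over the manifold; this rests squarely on the compactness of $M$ (which globally bounds $\|II\|$ and the higher derivatives of $E$) and must be tracked carefully through the expansion, even though each individual estimate is routine.
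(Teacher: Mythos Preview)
Your proof is correct and takes a genuinely different route from the paper's. The paper argues by contradiction: it assumes a sequence $(x_n,y_n)$ with $\dist_{\R^{2D}}(E(x_n),E(y_n))\to 0$ on which the inequality fails, extracts by compactness a subsequence converging to a single point $z$, works in normal coordinates at $z$, invokes an external estimate (Lemma~12 of~\cite{Xu14}) to compare $\dist_M$ with the Euclidean distance on $T_zM$, and then Taylor-expands $E\circ\exp_z$ to compare the latter with the ambient Euclidean distance. You instead give a direct argument: the trivial chord-versus-arc inequality handles one direction; a uniform Taylor expansion of $s\mapsto E(\exp_x(sv))$ together with $\|F'\|\equiv 1$ and $F'(0)\cdot F''(0)=0$ yields the cubic error $\dist_M-\dist_{\R^{2D}}\le K\dist_M^3$; and uniform continuity of $E^{-1}$ converts the Euclidean hypothesis into a Riemannian one. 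Your approach is more self-contained (no external lemma) and more quantitative (it identifies the error as cubic, governed by the second fundamental form), at the cost of having to justify uniformity of $K$ over the unit tangent bundle, which you correctly flag and which does follow from compactness. The paper's contradiction argument sidesteps that bookkeeping by localizing everything at a single limit point $z$, but pays for it with a less transparent structure and an outside reference.
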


\begin{proof}
Suppose this is not true. Then there exists a sequence of $(x_n, y_n) \in M^2$ such that $\dist_{\R^{2D}} (E(x_n), E(y_n)) \rightarrow 0$ and
\begin{equation}\label{equ:assumption}
| \dist_M (x_n,y_n) - \dist_{\R^{2D}} (E(x_n), E(y_n)) | \geq C \dist_{\R^{2D}} (E(x_n), E(y_n)).
\end{equation}
Since $M$ is compact, there is a subsequence, denoted again by $(x_n, y_n)$, and a point $z\in M$ such that $x_n,y_n \rightarrow z$. By picking an orthonormal basis of the tangent space $T_z M$ and using the exponential map $\exp_z$, one has normal coordinates
\[
\Phi: \R^D \equiv T_z M \supset B_z(\mb{0}, r) \longrightarrow M
\]
where $B_z(\mb{0}, r)$ is the $r$-ball centered the origin on $T_z M$. Let $\log_z = \exp_z^{-1}$ be the logarithm map at $z$ and $\dist_I$ be the Euclidean distance on $T_z M$. Let $\mb{x}_n=\log_z(x_n)$ and $\mb{y}_n=\log_z(y_n)$. Applying Lemma~12 in~\cite[page 24]{Xu14} for $\mb{x}_n, \mb{y}_n$,
\begin{equation}\label{equ:local1}
| \dist_M (x_n, y_n) - \dist_I (\mb{x}_n, \mb{y}_n) | < O(\max\{\|\mb{x}_n\|_2^2,\|\mb{y}_n\|_2^2\}) \dist_I (\mb{x}_n, \mb{y}_n).
\end{equation}
Let $f$ be the composition of $\Phi$ with $E$,
\[
f : \quad \R^D \supset B_z(\mb{0}, r) \longrightarrow \R^{2D}.
\]
We note that $f(\mb{x}_n) = E(x_n)$ and $f(\mb{y}_n)= E(y_n)$. The Tyler series of $f$ is
\[
\displaystyle f(\mb{y}_n)-f(\mb{x}_n) = (\nabla f(\mb{x}_n))^T (\mb{y}_n - \mb{x}_n)+ \frac{1}{2} (\mb{y}_n - \mb{x}_n)^T (\nabla^2 f(\mb{x}_n)) (\mb{y}_n - \mb{x}_n)+\cdots.
\]
This implies that
\begin{equation}\label{eq:lemma:distance1}
\| f(\mb{y}_n)-f(\mb{x}_n) \|_2 = \| (\nabla f(\mb{x}_n))^T (\mb{y}_n - \mb{x}_n) \|_2 + O(\| \mb{y}_n - \mb{x}_n \|_2^2).
\end{equation}
On the one hand, since $E$ is an isometric embedding, the linear map
\[
\nabla f(\mb{0}) : \quad \R^D \longrightarrow \R^{2D}
\]
preserves the Euclidean distance. On the other hand, the smoothness of $f$ implies that $\nabla f(\mb{x})$ has bounded derivatives. Thus,
\begin{equation}\label{eq:lemma:distance2}
\nabla f(\mb{x}_n) = \nabla f(\mb{0})+O(\|\mb{x}_n\|_2).
\end{equation}
Then, \eqref{eq:lemma:distance1} and~\eqref{eq:lemma:distance2} and the triangle inequality imply that
\[
\| f(\mb{y}_n)-f(\mb{x}_n) \|_2 = \| \mb{y}_n - \mb{x}_n \|_2 + O(\|\mb{x}_n\|_2\| \mb{y}_n - \mb{x}_n \|_2+\| \mb{y}_n - \mb{x}_n \|_2^2).
\]
In other words,
\begin{equation}\label{equ:local2}
| \dist_I (\mb{x}_n, \mb{y}_n) - \dist_{\R^{2D}} (E(x_n), E(y_n)) | \leq O(\|\mb{x}_n\|_2+\| \mb{y}_n - \mb{x}_n \|_2) \dist_I (\mb{x}_n, \mb{y}_n).
\end{equation}
By \eqref{equ:local1} and \eqref{equ:local2},
\[
| \dist_M (x_n, y_n) - \dist_{\R^{2D}} (E(x_n), E(y_n)) | < c_n \dist_I (\mb{x}_n, \mb{y}_n),
\]
where $c_n= O(\|\mb{x}_n\|_2+\| \mb{y}_n - \mb{x}_n \|_2+\max\{\|\mb{x}_n\|_2^2,\|\mb{y}_n\|_2^2\})$. Moreover, by \eqref{equ:local2},
\[
\dist_I (\mb{x}_n, \mb{y}_n) < (1-O(\|\mb{x}_n\|_2+\| \mb{y}_n - \mb{x}_n \|_2))^{-1} \dist_{\R^{2D}} (E(x_n), E(y_n)).
\]
Therefore, if $c'_n = c_n/(1-O(\|\mb{x}_n\|_2+\| \mb{y}_n - \mb{x}_n \|_2))$, then
\[
| \dist_M (x_n, y_n) - \dist_{\R^{2D}} (E(x_n), E(y_n)) | < c'_n \dist_{\R^{2D}} (E(x_n), E(y_n)).
\]
We note that $c'_n \rightarrow 0$ as $n\rightarrow \infty$ since $\mb{x}_n, \mb{y}_n\rightarrow \mb{0}$ and this contradicts assumption~\eqref{equ:assumption}.

\end{proof}

Now, we construct an $\epsilon_n$-net of $M$ from $GV(\epsilon_n/3)$.
\begin{lemma}\label{lemma:coverbound}
Let $\displaystyle \widehat{GV}(\epsilon_n/3)=\{ x\in GV(\epsilon_n/3) | \dist_{\R^{2D}}(x, M) \leq \epsilon_n/3 \}.$
There exists a constant $\delta>0$ such that $E^{-1}(\mathrm{Proj}_{E(M)}(\widehat{GV}(\epsilon_n/3)))$ is an $\epsilon_n$-net of $M$ when $\epsilon_n < \delta$. Consequently,
\[
N(\epsilon_n, M, \dist_M) \leq N(\epsilon_n/3, HC, \dist_{\R^{2D}}).
\]
\end{lemma}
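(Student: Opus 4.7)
The plan is to show directly that for every $x \in M$ there is a point in $E^{-1}(\mathrm{Proj}_{E(M)}(\widehat{GV}(\epsilon_n/3)))$ within geodesic distance $\epsilon_n$ of $x$, using only the Euclidean covering property of the grid, contractivity of the nearest-point projection onto $E(M)$, and the local comparison between $\dist_M$ and $\dist_{\R^{2D}}$ given by Lemma~\ref{lemma:distance}.

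First, since $GV(\epsilon_n/3)$ is an $\epsilon_n/3$-net of $HC$ and $E(x)\in E(M)\subset HC$, I would pick $v\in GV(\epsilon_n/3)$ with $\dist_{\R^{2D}}(v,E(x))\leq \epsilon_n/3$. This inequality shows both $\dist_{\R^{2D}}(v,E(M))\leq \epsilon_n/3$, so $v\in \widehat{GV}(\epsilon_n/3)$, and that, writing $w:=\mathrm{Proj}_{E(M)}(v)\in E(M)$, one has $\dist_{\R^{2D}}(v,w)\leq \dist_{\R^{2D}}(v,E(x))\leq \epsilon_n/3$ by the definition of nearest-point projection onto the (compact) set $E(M)$. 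The triangle inequality then gives $\dist_{\R^{2D}}(E(x),w)\leq 2\epsilon_n/3$.

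Next, I would invoke Lemma~\ref{lemma:distance} with the choice $C=1/2$ to obtain a constant $\delta_{1/2}>0$ such that whenever $\dist_{\R^{2D}}(E(x),E(y))<\delta_{1/2}$ one has $\dist_M(x,y)\leq \tfrac{3}{2}\dist_{\R^{2D}}(E(x),E(y))$. Setting $\delta:=\tfrac{2}{3}\delta_{1/2}$, for $\epsilon_n<\delta$ the point $y:=E^{-1}(w)\in M$ satisfies $\dist_{\R^{2D}}(E(x),E(y))\leq 2\epsilon_n/3 <\delta_{1/2}$, so
\[
\dist_M(x,E^{-1}(w))\leq \tfrac{3}{2}\cdot \tfrac{2\epsilon_n}{3}=\epsilon_n.
\]
Thus $E^{-1}(\mathrm{Proj}_{E(M)}(\widehat{GV}(\epsilon_n/3)))$ is an $\epsilon_n$-net of $M$ w.r.t.\ $\dist_M$.

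Finally, the covering-number bound is immediate: the cardinality of $E^{-1}(\mathrm{Proj}_{E(M)}(\widehat{GV}(\epsilon_n/3)))$ is at most $|\widehat{GV}(\epsilon_n/3)|\leq |GV(\epsilon_n/3)|\leq N(\epsilon_n/3,HC,\dist_{\R^{2D}})$, since $E$ is injective and $\mathrm{Proj}_{E(M)}$ is a function. There is no real obstacle here; the entire content sits in Lemma~\ref{lemma:distance}, which has already been established. The only point that needs care is ensuring $\delta$ is small enough that the Euclidean $2\epsilon_n/3$-neighborhood of $E(x)$ enters the regime where Lemma~\ref{lemma:distance} applies with $C=1/2$, and that is handled transparently by the choice $\delta=\tfrac{2}{3}\delta_{1/2}$.
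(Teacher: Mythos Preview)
Your proof is correct and follows essentially the same route as the paper: pick a nearby grid vertex, project onto $E(M)$, bound the Euclidean distance by $2\epsilon_n/3$ via the triangle inequality, and then invoke Lemma~\ref{lemma:distance} to pass to $\dist_M$. The only cosmetic difference is that the paper applies Lemma~\ref{lemma:distance} with $C=1/3$ (obtaining $\dist_M(x,z)<\tfrac{4}{3}\cdot\tfrac{2\epsilon_n}{3}=8\epsilon_n/9$) while you use $C=1/2$ (obtaining exactly $\epsilon_n$); either choice suffices. One small quibble: in your final line the inequality $|GV(\epsilon_n/3)|\leq N(\epsilon_n/3,HC,\dist_{\R^{2D}})$ goes the wrong way by the definition of covering number, but this is harmless here since in context $N(\epsilon_n/3,HC,\dist_{\R^{2D}})$ is being identified with the explicit grid-count bound~\eqref{equ:cover}, and the paper is equally informal on this point.
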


\begin{proof}
Suppose $\epsilon_n < \delta := \delta_{1/3}$ where $\delta_{1/3}$ is the constant $\delta_C$ in Lemma~\ref{lemma:distance} with $C=1/3$. For any point $x\in M$, let $y$ be the vertex in $GV(\epsilon_n/3)$ that is closest to $E(x)$ w.r.t. $\dist_{\R^{2D}}$. Then, by definition,
$y\in \widehat{GV}(\epsilon_n/3)$. Let $z = E^{-1} (\mathrm{Proj}_{E(M)} (y))$. To prove the lemma, it is sufficient to show that
\begin{equation}
\label{eq:about_to_end}
\dist_M (x, z) < \epsilon_n.
\end{equation}

Since $\dist_{\R^{2D}} (E(x), E(z)) < 2\epsilon_n/3<\delta_{1/3}$, Lemma~\ref{lemma:distance} states that
\begin{equation}\label{eq:lemma:coverbound1}
\displaystyle | \dist_M (x,z) - \dist_{\R^{2D}} (E(x), E(z)) | < \frac{1}{3} \dist_{\R^{2D}} (E(x), E(z)).
\end{equation}
Inequality~\eqref{eq:lemma:coverbound1} implies~\eqref{eq:about_to_end} and thus the lemma as follows:
\[
\displaystyle \dist_M (x,z) < \frac{4}{3}\dist_{\R^{2D}} (E(x), E(z))<8\epsilon_n/9.
\]

\end{proof}

From now on, we fix an $\epsilon_n/3$-net $S_n$ of $M$, generated as above from the projection of regular grid vertices $GV(\epsilon_n/9)$ of $HC$ with grid spacing $\epsilon_n/(9\sqrt{2D})$. Lemma~\ref{lemma:neighbor} provides an upper bound of the number of points in $S_n$ in the $\epsilon_n$-neighborhood of $x\in S_n$.

\begin{lemma}\label{lemma:neighbor}
For $x\in S_n$ and $X:= \{ y\in S_n | \dist_M (x, y) \leq \epsilon_n\}$,
\[
\# X \leq 21^{2D}.
\]
\end{lemma}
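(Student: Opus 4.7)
The plan is to convert the Riemannian distance condition $\dist_M(x,y) \leq \epsilon_n$ into a Euclidean condition on the grid vertices in $\R^{2D}$ associated to $x$ and $y$ via the isometric embedding $E$, and then bound $\#X$ by counting such grid vertices directly.

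First I would invoke Lemma~\ref{lemma:distance} with a small constant $C$ (e.g.\ $C = 1/3$, or even smaller). Since $\epsilon_n \to 0$, we may assume $\epsilon_n$ is small enough that the hypothesis of the lemma holds for every pair of points in $X$. Then for every $y \in X$,
\[
\dist_{\R^{2D}}(E(x), E(y)) \leq \frac{\epsilon_n}{1-C}.
\]
Next, by the construction of $S_n$, there exist grid vertices $v_x, v_y \in \widehat{GV}(\epsilon_n/9)$ with $\mathrm{Proj}_{E(M)}(v_x) = E(x)$ and $\mathrm{Proj}_{E(M)}(v_y) = E(y)$; by definition of $\widehat{GV}(\epsilon_n/9)$, each such vertex lies within Euclidean distance $\epsilon_n/9$ of its own projection onto $E(M)$. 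Combining these bounds through the triangle inequality yields
\[
\dist_{\R^{2D}}(v_x, v_y) \leq \frac{\epsilon_n}{9} + \frac{\epsilon_n}{1-C} + \frac{\epsilon_n}{9} =: R,
\]
so $v_y$ lies in a Euclidean ball of fixed radius $R = R(\epsilon_n)$ around $v_x$.

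Then I would count the grid vertices inside this ball. The regular grid $GV(\epsilon_n/9)$ has spacing $s = \epsilon_n/(9\sqrt{2D})$ in each of the $2D$ coordinate directions, so the number of vertices inside the enclosing $\ell^\infty$-cube of side $2R$ is at most $(\lfloor 2R/s \rfloor + 1)^{2D}$. With $R$ as above, the ratio $2R/s$ is a pure constant independent of $\epsilon_n$, and a suitable choice of $C$ is supposed to bring $\lfloor 2R/s \rfloor + 1$ down to $21$. Since the assignment $y \mapsto v_y$ is at worst many-to-one (several grid vertices may project to the same point of $E(M)$), a single-valued section injects $X$ into the set just counted, and therefore $\#X \leq 21^{2D}$.

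The main obstacle is arriving at the specific constant $21^{2D}$ rather than a looser bound of the form $C^{2D}$: a naive use of the triangle inequality and $\ell^\infty$-cube containment yields a radius ratio of roughly $(22 + o(1))\sqrt{2D}$, which is too large. Tightening this requires exploiting the freedom to let $C$ in Lemma~\ref{lemma:distance} shrink, and possibly replacing the $\ell^\infty$-cube bound by a sharper packing estimate (the balls of radius $s/2$ around distinct grid vertices are pairwise disjoint and contained in the ball of radius $R + s/2$ about $v_x$) in order to squeeze $\lfloor 2R/s \rfloor + 1$ below $22$.
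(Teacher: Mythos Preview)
Your overall strategy matches the paper's, but the paper makes two sharpenings that resolve exactly the obstacle you flag at the end.

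First, you invoke Lemma~\ref{lemma:distance} to pass from $\dist_M(x,y)\le\epsilon_n$ to a Euclidean bound $\dist_{\R^{2D}}(E(x),E(y))\le\epsilon_n/(1-C)$. This detour is unnecessary: since $E$ is an isometric embedding, the Euclidean chord between $E(x)$ and $E(y)$ is never longer than the embedded geodesic, so one has the clean inequality $\dist_{\R^{2D}}(E(x),E(y))\le\dist_M(x,y)\le\epsilon_n$ with no $(1-C)^{-1}$ loss and no appeal to small $\epsilon_n$. Second, you center the counting ball at the grid vertex $v_x$ and thereby pick up an extra $\epsilon_n/9$ from the triangle inequality; the paper instead centers at $E(x)$ itself. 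With these two changes, for each $y\in X$ the associated grid vertex $z$ (satisfying $\dist_{\R^{2D}}(z,E(y))\le\epsilon_n/9$) obeys
\[
\dist_{\R^{2D}}(E(x),z)\;\le\;\dist_{\R^{2D}}(E(x),E(y))+\dist_{\R^{2D}}(E(y),z)\;\le\;\epsilon_n+\tfrac{\epsilon_n}{9}=\tfrac{10\epsilon_n}{9},
\]
and the problem reduces to counting grid vertices in a ball of radius $10\epsilon_n/9$ about the fixed point $E(x)$.

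One caveat worth noting: the paper's final line reads the grid spacing as $\epsilon_n/9$, which yields at most $21$ admissible values per coordinate and hence $21^{2D}$. The setup just before the lemma, however, specifies spacing $\epsilon_n/(9\sqrt{2D})$; with that value the same argument gives $(20\sqrt{2D}+1)^{2D}$ rather than $21^{2D}$. This discrepancy is harmless downstream, since only a bound of the form $C^{2D}$ for some absolute constant $C$ is used in~\eqref{equ:coverpp}, but it explains why your attempt to reach the exact constant $21$ via an $\ell^\infty$-count with spacing $\epsilon_n/(9\sqrt{2D})$ could not succeed.
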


\begin{proof}
If $y\in X\subset S_n$, then there is a point $z \in GV(\epsilon_n/9)$ such that
\begin{equation}\label{eq:lemma:neighbor1}
E^{-1}(\mathrm{Proj}_{E(M)}(z))=y \quad \text{and}\quad \dist_{\R^{2D}} (z,E(y)) \leq \epsilon_n/9.
\end{equation}
We note that
\begin{equation}\label{eq:lemma:neighbor2}
\dist_{\R^{2D}} (E(x), E(y))\leq \dist_{M} (x, y)
\end{equation}
since $E$ is an isometric embedding. Inequalities~\eqref{eq:lemma:neighbor1} and~\eqref{eq:lemma:neighbor2} and the triangle inequality imply that $\dist_{\R^{2D}} (E(x), z) \leq 10\epsilon_n/9$. Thus, if
\[
Y:= \{z\in GV(\epsilon_n/9) | \dist_{\R^{2D}} (E(x), z) \leq 10\epsilon_n/9\},
\]
then $X \subset E^{-1}(\mathrm{Proj}_{E(M)} ( Y) )$. Since the grid spacing is $\epsilon_n/9$, $\# X \leq \# Y = 21^{2D}$.

\end{proof}

\subsubsection{Covering Numbers of $\PP_{n,\alpha}$}
\label{sec:eps_net_P}

Recall that $PGF(a)\subset \PP_{\alpha}$ is the set of piecewise geodesic functions which map each interval $[ka, (k+1)a]$ to a geodesic on M for $0 \leq k < 1/a$. We define
\[
F_{S_n}(a) = \{ f\in PGF(a) | f(ka)\in S_n\, \text{for } 0\leq k<1/a\},
\]
where $S_n$ was defined just before Lemma~\ref{lemma:neighbor}. The following Lemma upper bounds $N(\epsilon_n, \PP_{n,\alpha}, d_{\infty})$. It uses the constant $\delta_{1/9}$ which was defined in Lemma~\ref{lemma:distance} (here $C=1/9$).
\begin{lemma}\label{lemma:function1}
If $M$ is a $D$-dimensional compact Riemannian manifold with diameter $\text{D}(M)$, two sequences $\{M_n\}_{n\in \N}, \{\epsilon_n\}_{n\in \N}$ such that $M_n \rightarrow \infty$ and $\epsilon_n < \delta_{1/9}$, and $a = \frac{\epsilon_n}{3M_n}$, then there is a subset of $F_{S_n}(a)$, which forms an $\epsilon_n$-net of $\PP_{n,\alpha}$ and
\begin{equation}\label{equ:coverpp}
\displaystyle N(\epsilon_n, \PP_{n,\alpha}, d_{\infty}) \leq \left( 21^{2D} \right)^{3M_n/\epsilon_n} \left( \frac{18\sqrt{2D}\text{D}(M)}{\epsilon_n} \right)^{2D}.
\end{equation}
\end{lemma}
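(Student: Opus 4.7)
The plan is to produce, for each $f\in \PP_{n,\alpha}$, a piecewise geodesic approximant $g\in F_{S_n}(a)$ satisfying $d_\infty(f,g)\leq \epsilon_n$, and then to bound the number of such $g$ by exploiting both the size of $S_n$ and the fact that consecutive breakpoints of $g$ are forced to lie within distance $\epsilon_n$ in $M$. The only case needed for Theorem~\ref{theorem:rate} is $\alpha=1$ (Lipschitz functions), so I will work with that throughout; the general Hölder case is identical after the obvious exponent change.

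Given $f$, I would define $g$ by choosing, for each $k\in\{0,1,\ldots,1/a\}$, a vertex $g(ka)\in S_n$ with $\dist_M(f(ka),g(ka))\leq \epsilon_n/3$, which is possible since $S_n$ is an $\epsilon_n/3$-net of $M$ by Lemma~\ref{lemma:coverbound} (this is where the hypothesis $\epsilon_n<\delta_{1/9}$ enters), and then extending $g$ on each interval $[ka,(k+1)a]$ to be a minimizing geodesic joining those values. To verify $d_\infty(f,g)\leq \epsilon_n$, fix $t\in[ka,(k+1)a]$ and set $s=(t-ka)/a\in[0,1]$; by the symmetric argument (using the other endpoint $(k+1)a$ when $s>1/2$) I may assume $s\leq 1/2$, and the triangle inequality through $f(ka)$ and $g(ka)$ gives
\[
\dist_M(f(t),g(t))\leq \dist_M(f(t),f(ka))+\dist_M(f(ka),g(ka))+\dist_M(g(ka),g(t)).
\]
The Lipschitz bound $\|f\|_\alpha\leq M_n$ yields $\dist_M(f(t),f(ka))\leq M_n s a\leq \epsilon_n/6$ with $a=\epsilon_n/(3M_n)$; by construction $\dist_M(f(ka),g(ka))\leq \epsilon_n/3$; and since $g$ is geodesic on $[ka,(k+1)a]$, $\dist_M(g(ka),g(t))\leq s\cdot \dist_M(g(ka),g((k+1)a))\leq s\epsilon_n$, where a further triangle inequality through $f(ka),f((k+1)a)$ produces $\dist_M(g(ka),g((k+1)a))\leq 2(\epsilon_n/3)+M_n a=\epsilon_n$. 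Summing gives $\epsilon_n/6+\epsilon_n/3+\epsilon_n/2=\epsilon_n$, as required.

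For the counting step, the construction automatically forces $\dist_M(g(ka),g((k+1)a))\leq \epsilon_n$ for every consecutive pair. Hence $g(0)$ ranges over at most $|S_n|$ vertices while each subsequent $g((k+1)a)$ is constrained to the $\epsilon_n$-ball around $g(ka)$ in $S_n$, which by Lemma~\ref{lemma:neighbor} contains at most $21^{2D}$ vertices. With $1/a=3M_n/\epsilon_n$ time points this yields at most $|S_n|\cdot(21^{2D})^{3M_n/\epsilon_n}$ attainable $g$'s. Combining Lemma~\ref{lemma:coverbound} with inequality~\eqref{equ:cover} at scale $\epsilon_n/9$ gives $|S_n|\leq (18\sqrt{2D}\text{D}(M)/\epsilon_n)^{2D}$, and multiplying the two estimates produces exactly~\eqref{equ:coverpp}.

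The main obstacle is the balancing inside the approximation estimate: the grid spacing $a=\epsilon_n/(3M_n)$ and the net resolution $\epsilon_n/3$ are tuned so that the three triangle-inequality summands each contribute at most $\epsilon_n/3$, but a naive endpoint choice inflates the total to $5\epsilon_n/3$. It is only the symmetry reduction $s\leq 1/2$ that brings the sum back down to exactly $\epsilon_n$, so the reader should be made aware that the bound is tight and that both reductions (snapping to $S_n$ and halving via symmetry) are genuinely needed. After this careful approximation, the combinatorial counting via Lemma~\ref{lemma:neighbor} and the $\epsilon_n/9$-scale estimate on $|S_n|$ are essentially bookkeeping.
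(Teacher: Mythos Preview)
Your proposal is correct and follows the same overall strategy as the paper: snap the breakpoints of $f$ to the $\epsilon_n/3$-net $S_n$ to produce a piecewise-geodesic $g\in F_{S_n}(a)$, verify $d_\infty(f,g)\leq\epsilon_n$, and then count the admissible $g$'s via Lemma~\ref{lemma:neighbor} and the bound on $|S_n|$.

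The only substantive difference is in the approximation step. The paper routes the triangle inequality through $f(ka)$ alone and asserts $\dist_M(f(ka),\hat f(t))\leq \dist_M(f(ka),\hat f(ka+a))$ for all $t\in[ka,ka+a]$, a convexity-type statement about geodesic balls that is not justified on a general compact manifold. Your symmetry reduction to $s\leq 1/2$ avoids this issue entirely and gives a clean, fully rigorous bound $\epsilon_n/6+\epsilon_n/3+\epsilon_n/2=\epsilon_n$; in that respect your argument is an improvement. One caveat: you assert that only $\alpha=1$ is needed for Theorem~\ref{theorem:rate}, but the paper's conclusion in \S\ref{sec:conclude:dbm} in fact selects $\alpha=1/2$. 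That said, the paper's own proof of this lemma also tacitly uses $\alpha=1$ in the line $\dist_M(f(ka),f(t))\leq M_n a$, so this is a shared gap rather than a defect of your proposal.
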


\begin{proof}
Given $f\in \PP_{n,\alpha}$, an approximation $\hat{f} \in F_{S_n}(a)$ is determined uniquely by specifying its boundary value $\hat{f} (ka)$ for $0 \leq k < 1/a$, which is given by
\[
\displaystyle \hat{f} (ka) = \text{arg min}_{x\in S_n} \dist_M (x, f(ka)).
\]
To show that $d_{\infty} (f, \hat{f}) \leq \epsilon_n$, We check the inequality $\dist_M (f(t), \hat{f}(t)) \leq \epsilon_n$ for all $t \in [0,1]$. Suppose $t \in [ka, ka+a]$. Since $\| f\|_{\alpha} \leq M_n$,
\begin{equation}\label{eq:lemma:function1:1}
\dist_M (f(ka), f(t)) \leq M_n a \leq \epsilon_n/3.
\end{equation}
Moreover, because $\hat{f}$ is a mapping to a geodesic on $[ka, ka+a]$ and the fact that $S_n$ is $\epsilon_n/3$-net of $M$,
\begin{equation}\label{eq:revision1}
\dist_M (f(ka), \hat{f}(ka)) \leq \epsilon_n/3
\end{equation}
and
\begin{align}
\label{eq:lemma:function1:2}
\dist_M (f(ka), \hat{f}(t)) &\leq \dist_M (f(ka), \hat{f}(ka+a)) \\ \nonumber
 &\leq \dist_M (f(ka), f(ka+a))+\dist_M (f(ka+a), \hat{f}(ka+a)) \\ \nonumber
 &\leq 2\epsilon_n/3.
\end{align}
It follows from~\eqref{eq:lemma:function1:1}, \eqref{eq:lemma:function1:2} and the triangle inequality that
\begin{equation}\label{eq:revision2}
\dist_M (f(t), \hat{f}(t)) \leq \epsilon_n.
\end{equation}
Define subset of $F_{S_n}(a)$:
\[
SF_{S_n}(a) = \{ f\in F_{S_n}(a) | \dist_M (f(ka), f(ka+a)) \leq \epsilon_n,\, \forall 0\leq k < 1/a \}.
\]
By the definitions of $\hat{f}$ and $S_n$ and~\eqref{eq:lemma:function1:1}, we conclude that $\hat{f} \in SF_{S_n}(a)$. Thus, $SF_{S_n}(a)$ is an $\epsilon_n$-net of $\PP_{n,\alpha}$.

By definition, $N(\epsilon_n, \PP_{n,\alpha}, d_{\infty}) \leq \# SF_{S_n}(a)$. It is thus sufficient to estimate $\# SF_{S_n}(a)$. Lemma~\ref{lemma:coverbound} and~\eqref{equ:cover} imply that
\[
\displaystyle \#S_n \leq \left( \frac{18\text{D}(M)}{\epsilon_n/\sqrt{2D}} \right)^{2D},
\]
which is the upper bound of the number of values that $\hat{f} (0)$ can take. Given the value of $\hat{f} (ka)$, there are $21^{2D}$ choices for $\hat{f} (ka+a)$ by Lemma~\ref{lemma:neighbor}. Thus, for $a = \epsilon_n/(3M_n)$, \eqref{equ:coverpp} is concluded as follows
\[
\# SF_{S_n}(a) \leq \left( 21^{2D}\right)^{3M_n/\epsilon_n} \left( \frac{18\sqrt{2D}\text{D}(M)}{\epsilon_n} \right)^{2D}.
\]
\end{proof}

\subsubsection{Covering Numbers of $\DD_{n,\alpha}$}
\label{sec:eps_net_D}

In this section, we prove the following lemma.
\begin{lemma}
If $M_n$ satisfies
\begin{equation}\label{equ:condition1}
\displaystyle M_n \leq \frac{n\epsilon_n^3}{6C_1D(\log(21)+1)},
\end{equation}
where $C_1$ was defined in Lemma~\ref{lemma:boundp}, then for $n$ sufficiently large $\DD_{n,\alpha}$ satisfies the inequality~2.2 of~\cite[Theorem~2.1]{ghosal2000}, that is,
\begin{equation}\label{equ:ghosal2d2}
\log N(\epsilon_n, \DD_{n,\alpha}, d_{q, \DD}) \leq n\epsilon_n^2.
\end{equation}
\end{lemma}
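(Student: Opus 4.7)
The plan is to transfer the covering number estimate on $\PP_{n,\alpha}$ (obtained in Lemma~\ref{lemma:function1}) over to $\DD_{n,\alpha}$ using the second inequality in Lemma~\ref{lemma:boundp}, and then to verify that the chosen scale of $M_n$ relative to $\epsilon_n$ makes the resulting bound smaller than $n\epsilon_n^2$.

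First I would observe that the map $\Phi: f \mapsto p_f$ from~\eqref{map:contden} sends $\PP_{n,\alpha}$ onto $\DD_{n,\alpha}$, and by the second inequality of Lemma~\ref{lemma:boundp},
\[
d_{q,\DD}(p_{f_1},p_{f_2}) \leq C_1\, d_\infty(f_1,f_2).
\]
Consequently any $(\epsilon_n/C_1)$-net of $\PP_{n,\alpha}$ with respect to $d_\infty$ pushes forward under $\Phi$ to an $\epsilon_n$-net of $\DD_{n,\alpha}$ with respect to $d_{q,\DD}$, yielding
\[
N(\epsilon_n,\DD_{n,\alpha},d_{q,\DD}) \leq N(\epsilon_n/C_1,\PP_{n,\alpha},d_\infty).
\]

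Next I would apply Lemma~\ref{lemma:function1} with $\epsilon_n$ there replaced by $\epsilon_n/C_1$ (which is legitimate once $n$ is large enough that $\epsilon_n/C_1 < \delta_{1/9}$). This gives
\[
N(\epsilon_n,\DD_{n,\alpha},d_{q,\DD}) \leq \bigl(21^{2D}\bigr)^{3C_1 M_n/\epsilon_n}\left(\frac{18\sqrt{2D}\,\text{D}(M) C_1}{\epsilon_n}\right)^{2D}.
\]
Taking logarithms,
\[
\log N(\epsilon_n,\DD_{n,\alpha},d_{q,\DD}) \leq \frac{6 D C_1 M_n \log 21}{\epsilon_n} + 2D \log\!\left(\frac{18\sqrt{2D}\,\text{D}(M) C_1}{\epsilon_n}\right).
\]

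The main (and essentially only) real step is to check that both summands above are at most $n\epsilon_n^2$. Using the hypothesis~\eqref{equ:condition1} on $M_n$,
\[
\frac{6 D C_1 M_n \log 21}{\epsilon_n} \leq \frac{\log 21}{\log 21 + 1}\, n\epsilon_n^2,
\]
which is strictly less than $n\epsilon_n^2$. For the second term, since the contraction rate $\epsilon_n = \sqrt{b_n/C_0} = O(n^{-1/4+\epsilon})$ satisfies $n\epsilon_n^2 \to \infty$ (indeed $n\epsilon_n^2$ is at least of polynomial order in $n$) while $2D\log(\text{const}/\epsilon_n) = O(\log n)$, there exists $n_0$ such that for all $n\geq n_0$,
\[
2D\log\!\left(\frac{18\sqrt{2D}\,\text{D}(M) C_1}{\epsilon_n}\right) \leq \frac{1}{\log 21 + 1}\, n\epsilon_n^2.
\]
Adding the two bounds yields~\eqref{equ:ghosal2d2} for $n$ sufficiently large, which is inequality~2.2 of~\cite{ghosal2000}. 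The only delicate point is this asymptotic comparison, but it is purely mechanical given the explicit rate of $\epsilon_n$ fixed in Theorem~\ref{theorem:rate}.
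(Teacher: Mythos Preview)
Your proposal is correct and follows essentially the same approach as the paper: transfer the covering number from $\PP_{n,\alpha}$ to $\DD_{n,\alpha}$ via the Lipschitz bound in Lemma~\ref{lemma:boundp}, apply Lemma~\ref{lemma:function1} at scale $\epsilon_n/C_1$, and then verify that condition~\eqref{equ:condition1} forces the resulting logarithm below $n\epsilon_n^2$. The only cosmetic difference is in how the residual logarithmic term is disposed of: the paper absorbs $2D\log(\mathrm{const}/\epsilon_n)$ into the main term using $\log x\le x$ and then observes that $2D\log(\mathrm{const}/M_n)$ becomes negative since $M_n\to\infty$, whereas you instead compare $2D\log(\mathrm{const}/\epsilon_n)=O(\log n)$ directly against the polynomially growing slack $\tfrac{1}{\log 21+1}\,n\epsilon_n^2$. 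Both treatments are elementary; the paper's version is marginally more self-contained because it only uses $M_n\to\infty$ (built into the sieve definition) rather than the explicit rate of $\epsilon_n$ fixed in Theorem~\ref{theorem:rate}, but since the lemma is invoked only in that proof your appeal to the rate is harmless.
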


\begin{proof}

Recall that $\DD_{n,\alpha} = \Phi(\PP_{n,\alpha})$ and $d_{q, \DD} (p_{f_1}, p_{f_2}) \leq C_1 d_{\infty} (f_1, f_2)$ (see Lemma~\ref{lemma:boundp}). A consequence of this is that an $\epsilon_n$-net of $\DD_{n,\alpha}$ can be induced from an $\epsilon_n/C_1$-net of $\PP_{n,\alpha}$. Therefore,
\begin{equation}\label{eq:enetDDn}
\displaystyle N(\epsilon_n, \DD_{n,\alpha}, d_{q, \DD}) \leq N(\epsilon_n/C_1, \PP_{n,\alpha}, d_{\infty}) \leq \left( 21^{2D} \right)^{3C_1M_n/\epsilon_n} \left( \frac{18C_1\sqrt{2D}\text{D}(M)}{\epsilon_n} \right)^{2D}.
\end{equation}

To conclude~\eqref{equ:ghosal2d2}, it is enough to show that
\begin{equation}\label{eq:revision3}
\displaystyle \frac{3C_1M_n}{\epsilon_n} (2D\log(21)+2D) + 2D\log\left( \frac{18C_1\sqrt{2D}\text{D}(M)}{3C_1M_n} \right) \leq n\epsilon_n^2.
\end{equation}
We verify it for $n$ sufficiently large. Since $M_n \rightarrow \infty$, the second term of the LHS of~\eqref{eq:revision3} will be less than zero for large $n$. On the other hand, it follows from~\eqref{equ:condition1} that the first term of the LHS of~\eqref{eq:revision3} is less than or equal to $n\epsilon_n^2$.

\end{proof}

\subsection{Verification of Inequality~2.3 of~\cite{ghosal2000}}\label{sec:density2}

Recall that the prior $\Pi_n$, with support on $PGF(b_n)\subset \PP_{\alpha}$, is given by the discretized Brownian motion at times $b_n, 2b_n,\ldots, 1$. More specifically, we define the prior $\Pi_n$ on $PGF(b_n)$ by fixing the joint distribution of $f(kb_n)$ for $0\leq k < 1/b_n$, whose density is given by
\begin{equation}\label{equ:discretedensity}
\pi_n (f) = s(f(0))\prod_{k=0}^{1/b_n} p_{b_n}(f(kb_n), f(kb_n+b_n)).
\end{equation}
where $s$ is a fixed density function with support on $M$ for $f(0)$, and $p_{b_n} (x,y)$ is the transition probability from $x$ to $y$ of the Brownian motion at time $b_n$.

In this section, we show that if the sequence $b_n$ is properly chosen, then $\Pi_n$ satisfies the inequality~2.3 of~\cite[Theorem~2.1]{ghosal2000}, that is,
\begin{equation}\label{equ:ghosal2d3}
\Pi_n (\DD_{\alpha} \backslash \DD_{n,\alpha}) \leq \exp [-n \epsilon_n^2 (C+4)].
\end{equation}



We first establish Lemma~\ref{lemma:density2} below and then use it to conclude~\eqref{equ:ghosal2d3} in Lemma~\ref{lemma:function2} below (under a condition on $b_n$). We use the following set
\begin{equation}\label{equ:subset}
\begin{aligned}
X:=\{ f\in PGF(b_n) | \dist_M(f(k_1 b_n), f(k_2 b_n)) &\leq M_n (k_2 b_n - k_1 b_n)^{\alpha} /3,\\
&\, \forall 0\leq k_1 <k_2 < 1/b_n \}
\end{aligned}
\end{equation}

\begin{lemma}\label{lemma:density2}
The set $X$ is contained in $PGF(b_n)\cap \PP_{n,\alpha}$.
\end{lemma}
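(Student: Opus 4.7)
The plan is to show $X \subset \PP_{n,\alpha}$, since the containment $X \subset PGF(b_n)$ is immediate from the definition of $X$. By definition of $\PP_{n,\alpha}$, it suffices to verify that every $f \in X$ satisfies $\|f\|_\alpha \leq M_n$, i.e., $\dist_M(f(t_1),f(t_2)) \leq M_n |t_2-t_1|^\alpha$ for all $t_1 < t_2$ in $[0,1]$.

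The first ingredient is a \emph{sub-interval} bound. Because $f \in PGF(b_n)$, the restriction of $f$ to $[kb_n,(k+1)b_n]$ is a constant-speed geodesic joining $f(kb_n)$ and $f((k+1)b_n)$. Taking $k_1=k,k_2=k+1$ in the definition of $X$ yields $\dist_M(f(kb_n),f((k+1)b_n)) \leq M_n b_n^\alpha/3$. So for any $s,t$ in a single such interval,
\[
\dist_M(f(s),f(t)) = \frac{|t-s|}{b_n}\,\dist_M(f(kb_n),f((k+1)b_n)) \leq \frac{M_n}{3} |t-s|\, b_n^{\alpha-1}.
\]
Using $|t-s|\leq b_n$ and $0<\alpha\leq 1$, one has $|t-s|\leq b_n^{1-\alpha}|t-s|^\alpha$, so the right-hand side is at most $M_n|t-s|^\alpha/3$. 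This handles the case when $t_1,t_2$ lie in the same sub-interval $[kb_n,(k+1)b_n]$.

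For the general case, write $k_1 = \lfloor t_1/b_n \rfloor$ and $k_2 = \lfloor t_2/b_n \rfloor$ (with the obvious adjustment if $t_2 = 1$) and assume $k_1 < k_2$. Insert the grid points $(k_1+1)b_n$ and $k_2 b_n$ and apply the triangle inequality:
\[
\dist_M(f(t_1),f(t_2)) \leq \dist_M(f(t_1),f((k_1+1)b_n)) + \dist_M(f((k_1+1)b_n),f(k_2 b_n)) + \dist_M(f(k_2 b_n),f(t_2)).
\]
The first and third terms are bounded by the sub-interval estimate above, giving $M_n((k_1+1)b_n-t_1)^\alpha/3$ and $M_n(t_2-k_2 b_n)^\alpha/3$ respectively; the middle term is bounded directly by the $X$-condition by $M_n(k_2 b_n - (k_1+1)b_n)^\alpha/3$. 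Each of the three gap-lengths is $\leq t_2-t_1$, so each $\alpha$-power is $\leq (t_2-t_1)^\alpha$. Summing the three contributions yields $\dist_M(f(t_1),f(t_2)) \leq M_n (t_2-t_1)^\alpha$, completing the verification that $\|f\|_\alpha \leq M_n$.

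There is no real obstacle; the argument is bookkeeping on a piecewise-geodesic path. The only mildly subtle point is the sub-interval estimate: one must exploit $b_n < 1$ and $\alpha \leq 1$ to upgrade a linear-in-time bound (arising from the geodesic being parametrized at constant speed over an interval of length $b_n$) to the desired Hölder bound with the same constant $M_n/3$. The factor $3$ baked into the definition of $X$ is precisely what absorbs the three-term triangle inequality in the general case.
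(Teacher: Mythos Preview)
Your proof is correct and follows essentially the same approach as the paper's: first establish the H\"older bound on a single geodesic segment using the constant-speed parametrization together with $|t-s|\le b_n$ and $\alpha\le 1$, then for general $t_1<t_2$ split at the two neighboring grid points and bound the three resulting pieces by $M_n/3$ times the corresponding gap raised to the $\alpha$, summing to at most $M_n(t_2-t_1)^\alpha$. The only cosmetic difference is that you phrase the sub-interval step via $|t-s|\le b_n^{1-\alpha}|t-s|^\alpha$ whereas the paper writes it as $|t-s|/b_n\le(|t-s|/b_n)^\alpha$; these are the same inequality.
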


\begin{proof}
By definition of $\PP_{n,\alpha}$, it is enough to show that if $f\in X$, then
\[
\dist_M(f(t_1), f(t_2)) \leq M_n |t_2 - t_1|^{\alpha}, \quad \forall 0\leq t_1 < t_2 \leq 1.
\]
Suppose $t_1, t_2\in [k b_n, (k+1) b_n] $ for some $k$ without loss of generality. Since $f$ is geodesic on this interval and $f\in X$,
\begin{equation*}
\begin{aligned}
\displaystyle \dist_M(f(t_1), f(t_2)) &= \frac{| t_2 -t_1 |}{b_n}\dist_M(f(k b_n), f((k+1)b_n)) \\
 &\leq \frac{| t_2 -t_1 |^{\alpha}}{b_n^{\alpha}}\dist_M(f(k b_n), f((k+1)b_n)) \\
 & \leq \frac{M_n}{3} | t_2 -t_1 |^{\alpha}.
\end{aligned}
\end{equation*}
Now, let $t_1 \in [k_1 b_n, (k_1+1) b_n]$ and $t_2\in [k_2 b_n, (k_2+1) b_n]$ for $k_1 <k_2$. By the triangle inequality,
\begin{equation*}
\begin{aligned}
\displaystyle \dist_M(f(t_1), f(t_2)) &\leq \frac{M_n}{3} | k_1 b_n +b_n -t_1 |^{\alpha}+\frac{M_n}{3} | (k_2-k_1-1) b_n |^{\alpha}+ \frac{M_n}{3} | t_2 - k_1 b_n |^{\alpha} \\
 &\leq M_n | t_2 -t_1 |^{\alpha}.
\end{aligned}
\end{equation*}
This completes the proof.

\end{proof}

Next we consider the upper bound of the probability $\Pi_n (\PP_{\alpha}\backslash \PP_{n, \alpha})$. It uses a constant $C_2$ which is presented in Theorem~5.3.4 in~\cite[page 141]{sam}. It also introduces a constraint on $M_n$ and $\epsilon_n$ (see~\eqref{equ:condition2}).
\begin{lemma}\label{lemma:function2}
If $\frac{1}{2} \leq \alpha \leq 1$, $b_n= M_n^{-c}$ for a constant $c$ s.t. $0<c<1/\alpha$ and
\begin{equation}\label{equ:condition2}
C_2 \mathrm{Vol}(M)M_n^{c(2D+3)/2} \exp[-M_n^{2-(2\alpha-1)c}/18] \leq \exp[-n\epsilon_n^2 (C+4)],
\end{equation}
then~\eqref{equ:ghosal2d3} is satisfied.
\end{lemma}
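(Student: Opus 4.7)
The plan is to translate the bound on $\Pi_n(\DD_\alpha \setminus \DD_{n,\alpha})$ into one on $\Pi_n(PGF(b_n) \setminus \PP_{n,\alpha})$ via the identification $\Phi$ in~\eqref{map:contden}: since $\Pi_n$ is supported on $PGF(b_n) \subset \PP_\alpha$ and $\DD_{n,\alpha} = \Phi(\PP_{n,\alpha})$, the two events correspond under $\Phi$. Now Lemma~\ref{lemma:density2} says $X \subset PGF(b_n) \cap \PP_{n,\alpha}$, so taking complements within $PGF(b_n)$ gives $PGF(b_n) \setminus \PP_{n,\alpha} \subset PGF(b_n) \setminus X$, and it therefore suffices to upper bound $\Pi_n(PGF(b_n) \setminus X)$ by the left-hand side of~\eqref{equ:condition2}.

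Next, I would apply a union bound over the $\binom{1/b_n}{2}$ pairs of nodes $0 \leq k_1 < k_2 < 1/b_n$ so that
\[
\Pi_n(PGF(b_n) \setminus X) \leq \sum_{k_1 < k_2} \Pi_n\bigl[\dist_M(f(k_1 b_n), f(k_2 b_n)) > M_n (k_2 - k_1)^\alpha b_n^\alpha / 3\bigr].
\]
Under $\Pi_n$, the conditional law of $f(k_2 b_n)$ given $f(k_1 b_n) = x$ is the heat kernel $p_{(k_2 - k_1) b_n}(x, \cdot)$ on $M$. Theorem~5.3.4 of~\cite{sam} supplies a Gaussian-type tail bound of the form $P_x[\dist_M(X_t, x) > r] \leq C_2\, \mathrm{Vol}(M)\, t^{-D/2} \exp(-r^2/(2t))$, uniformly over $x \in M$. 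Substituting $t = (k_2 - k_1) b_n$ and $r = M_n (k_2 - k_1)^\alpha b_n^\alpha / 3$ transforms the exponent into
\[
-\frac{M_n^2 (k_2 - k_1)^{2\alpha - 1} b_n^{2\alpha - 1}}{18}.
\]
The hypothesis $\alpha \geq 1/2$ forces $(k_2 - k_1)^{2\alpha - 1} \geq 1$, and the choice $b_n = M_n^{-c}$ together with $c < 1/\alpha$ reduces this to $-M_n^{2-(2\alpha-1)c}/18$ in the worst case, with exponent $2 - (2\alpha - 1)c > 0$.

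Finally, combining the union bound over $O(b_n^{-2}) = O(M_n^{2c})$ pairs with the heat-kernel prefactor $t^{-D/2} \leq b_n^{-D/2} = M_n^{cD/2}$ (attained at $k_2 - k_1 = 1$, which dominates the sum because the Gaussian term decays in $k_2 - k_1$), and accounting for the initial distribution $s$ of $f(0)$, which contributes a factor of at most $\mathrm{Vol}(M)$, yields a polynomial prefactor of the form $M_n^{c(2D+3)/2}$; the total bound is then
\[
\Pi_n(PGF(b_n) \setminus X) \leq C_2\, \mathrm{Vol}(M)\, M_n^{c(2D+3)/2} \exp\bigl(-M_n^{2-(2\alpha-1)c}/18\bigr),
\]
which by~\eqref{equ:condition2} is at most $\exp[-n\epsilon_n^2(C+4)]$, establishing~\eqref{equ:ghosal2d3}. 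The main obstacle I anticipate is pinning down the precise polynomial prefactor $M_n^{c(2D+3)/2}$: this requires using the sharp form of the Gaussian tail estimate (accounting for the $(D-1)$-dimensional surface area that enters the radial integration on $M$) rather than a naive volume bound, together with careful bookkeeping of the combinatorial factors from the union bound. Once the Gaussian heat-kernel estimate is invoked, however, the exponential term is manifestly the dominant contribution, and the remaining polynomial factors are absorbed by the hypothesis~\eqref{equ:condition2}.
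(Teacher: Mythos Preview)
Your proposal is correct and follows the same approach as the paper: reduce via Lemma~\ref{lemma:density2} to bounding $\Pi_n(PGF(b_n)\setminus X)$, union-bound over pairs $(k_1,k_2)$, apply the Gaussian heat-kernel tail from Theorem~5.3.4 of~\cite{sam}, and use $\alpha\geq\tfrac12$ to identify $k_2-k_1=1$ as the worst case. On the prefactor you flagged: the paper simply reads off a per-term bound $C_2\,b_n^{-(2D-1)/2}\exp[-b_n^{2\alpha-1}M_n^2/18]\,\mathrm{Vol}(M)$ directly from the cited theorem (so no sharper radial integration is needed), and the $\mathrm{Vol}(M)$ there comes from integrating the heat-kernel upper bound over the target point in $M$, not from the initial law $s$; multiplying by the $b_n^{-2}$ pairs then yields $b_n^{-(2D+3)/2}=M_n^{c(2D+3)/2}$.
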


\begin{proof}

We define
\[
X_{k_1, k_2} :=\{ f\in PGF(b_n) | \dist_M(f(k_1 b_n), f(k_2 b_n)) > M_n (k_2 b_n - k_1 b_n)^{\alpha} /3\}.
\]
When $\alpha\geq \frac{1}{2}$, Theorem~5.3.4 in~\cite[page 141]{sam} implies that for the constant $C_2>0$
\begin{equation}\label{equ:lemma:function2:1}
\Pi_n(X_{k_1, k_2}) \leq \frac{C_2}{b_n^{(2D-1)/2}} \exp[-b_n^{2\alpha} (M_n/3)^2/(2b_n)] \text{Vol}(M).
\end{equation}
Consequently,
\begin{align}\label{equ:lemma:function2:2}
\displaystyle \Pi_n(\PP_{\alpha}\backslash \PP_{n,\alpha}) &\leq \Pi_n(PGF(b_n)\backslash (PGF(b_n)\cap \PP_{n,\alpha})) \\ \nonumber
& \leq \Pi_n(PGF(b_n)\backslash X) \leq \sum_{0\leq i <j\leq \frac{1}{b_n}} \Pi_n(X_{k_i, k_j}) \\ \nonumber
 & \leq \frac{1}{b_n^2} \left( \frac{C_2}{b_n^{(2D-1)/2}} \exp[-b_n^{2\alpha} (M_n/3)^2/(2b_n)] \text{Vol}(M)\right) \\ \nonumber
& =\frac{C_2 \text{Vol}(M)}{b_n^{(2D+3)/2}} \exp[-b_n^{2\alpha-1} M_n^2/18].
\end{align}
The first inequality of~\eqref{equ:lemma:function2:2} follows from the fact that the support of $\Pi_n$ is $PGF(b_n)$. The second inequality of~\eqref{equ:lemma:function2:2} follows from Lemma~\ref{lemma:density2}. The third inequality follows from the definitions of $X$ and $X_{k_i, k_j}$. The fourth inequality of~\eqref{equ:lemma:function2:2} follows from~\eqref{equ:lemma:function2:1}. The proof concludes by plugging $b_n=M_n^{-c}$ in~\eqref{equ:lemma:function2:2} and the fact
$$
\Pi_n(\PP_{\alpha}\backslash \PP_{n,\alpha}) = \Pi_n(\DD_{\alpha}\backslash \DD_{n,\alpha}).
$$

\end{proof}

\subsection{Verification of Inequality~2.4 of~\cite{ghosal2000}}\label{sec:density3}

We recall that inequality~2.4 of~\cite[Theorem~2.1]{ghosal2000} states that
\begin{equation}\label{equ:ghosal2d4}
\Pi_n \left( P_0 \left(\log\frac{p_0}{p}\right) \leq \epsilon_n^2, P_0 \left(\log\frac{p_0}{p}\right)^2 \leq \epsilon_n^2 \right) \geq \exp [-n\epsilon_n^2 C].
\end{equation}
We first establish two technical lemmas (Lemmas~\ref{lemma:pupper} and~\ref{lemma:approx}) and then prove~\eqref{equ:ghosal2d4} in Lemma~\ref{lemma:density3}. The formulation of Lemma~\ref{lemma:pupper} requires the following notation. We recall that by choosing a density $p(t)$ on the predictor $t$, there is a map $\Phi: \PP \rightarrow \DD$. For simplicity, we use the following notation:
\[
p(t, x)=\Phi(f) =p_{\sigma^2}(f(t), x)p(t),\quad p_0(t, x)=\Phi(f_0)=p_{\sigma^2}(f_0(t), x)p(t),
\]
where $f$ is any continuous function and $f_0$ is the true function. Let $P_0$ be the probability with density $p_0(t,x)$ and $P_0 f$ denote $\int f dP_{f_0}$. Here the density $p(t)$ of the predictor $t$ is assumed to be positive on $[0,1]$, so that both $p(t,x)$ and $p_0(t,x)$ are positive (their exact forms are irrelevant). We consider first the upper bounds of $P_0 \left(\log\frac{p_0}{p}\right)$ and $P_0 \left(\log\frac{p_0}{p}\right)^2$.

\begin{lemma}\label{lemma:pupper}
There exists a constant $C_3>0$ such that
\begin{equation}\label{equ:lemma:pupper}
P_0 \left(\log\frac{p_0}{p}\right) \leq C_3 d_{\infty}(f_0, f), \quad
P_0 \left(\log\frac{p_0}{p}\right)^2 \leq C_3 d_{\infty}(f_0, f).
\end{equation}
\end{lemma}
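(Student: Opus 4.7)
The plan is to reduce both bounds to a pointwise Lipschitz estimate on $\log p_{\sigma^2}(\,\cdot\,,x)$ that follows from the already-established Lipschitz property of the heat kernel (see~\eqref{eq:good_recovery}) combined with a uniform positive lower bound on $p_{\sigma^2}$. Explicitly, since $p(t)$ cancels in the ratio,
\[
\log\frac{p_0(t,x)}{p(t,x)} = \log p_{\sigma^2}(f_0(t),x) - \log p_{\sigma^2}(f(t),x).
\]
Thus the problem is purely about how Lipschitz the map $x_1 \mapsto \log p_{\sigma^2}(x_1, y)$ is.

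First I would verify that on the compact manifold $M$ there exist constants $0 < c_1 \le c_2 < \infty$ with $c_1 \le p_{\sigma^2}(x,y) \le c_2$ for all $x,y \in M$. The upper bound follows from continuity of the heat kernel (Theorem~4.1.4 of~\cite{sam}) and compactness of $M \times M$; the lower bound follows from the strict positivity of the heat kernel at fixed time $\sigma^2 > 0$ (a consequence of the parabolic maximum principle) together with compactness. On the interval $[c_1,c_2]$ the function $\log$ is Lipschitz with constant $1/c_1$, so combining with~\eqref{eq:good_recovery} yields the pointwise estimate
\[
\bigl|\log p_{\sigma^2}(f_0(t),x) - \log p_{\sigma^2}(f(t),x)\bigr| \le \frac{C_M}{c_1}\,\dist_M(f_0(t),f(t)) \le \frac{C_M}{c_1}\,d_{\infty}(f_0,f).
\]

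Having this pointwise bound, the two inequalities of~\eqref{equ:lemma:pupper} follow by integrating against the probability measure $P_0$. For the first,
\[
P_0\!\left(\log\frac{p_0}{p}\right) \le P_0\!\left(\left|\log\frac{p_0}{p}\right|\right) \le \frac{C_M}{c_1}\,d_{\infty}(f_0,f).
\]
For the second, square the pointwise bound to obtain $(\log(p_0/p))^2 \le (C_M/c_1)^2\, d_{\infty}(f_0,f)^2$, and then use $d_{\infty}(f_0,f) \le \text{D}(M)$ (the diameter of $M$) to dominate one factor and keep the other:
\[
P_0\!\left(\log\frac{p_0}{p}\right)^{\!2} \le \frac{C_M^2}{c_1^{2}}\,d_{\infty}(f_0,f)^{2} \le \frac{C_M^2\,\text{D}(M)}{c_1^{2}}\,d_{\infty}(f_0,f).
\]
Setting $C_3 := \max\bigl(C_M/c_1,\ C_M^2\,\text{D}(M)/c_1^{2}\bigr)$ yields the lemma.

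The only non-routine ingredient is the uniform positive lower bound $c_1$ on the heat kernel, which the paper has not explicitly recorded. Everything else is a direct consequence of the Lipschitz estimate~\eqref{eq:good_recovery} already proved in Lemma~\ref{lemma:boundp} and elementary manipulations; in particular no appeal to finer heat-kernel asymptotics is needed because the lemma only demands a linear (rather than quadratic) dependence on $d_{\infty}(f_0,f)$, which is why the diameter trick above suffices for the second inequality.
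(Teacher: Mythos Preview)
Your proof is correct and rests on the same two ingredients the paper uses: the uniform positive lower bound on $p_{\sigma^2}$ over $M\times M$ (which the paper does record, citing Theorem~4.1.1 of~\cite{sam}) and the Lipschitz estimate on the heat kernel. The only cosmetic difference is that you bound $|\log a - \log b|$ via Lipschitzness of $\log$ on $[c_1,c_2]$ and then invoke the diameter to convert the quadratic bound to a linear one, whereas the paper uses $\log x \le x-1$ together with a case split $\{p>p_0\}\cup\{p<p_0\}$; both routes are elementary and yield the same conclusion.
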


\begin{proof}
Theorem~4.1.1 in~\cite[page 102]{sam} states that $p_{\sigma^2} (x,y)$ is strictly positive on $M\times M$. Since $M\times M$ is compact, there exists two constants $c_1,\, c_2>0$ such that
\begin{equation}\label{equ:lemma:pupper:2}
c_1 \leq p_{\sigma^2} (x,y) \leq c_2
\end{equation}
for all $(x,y)\in M\times M$. Moreover, for the same reason, $p_{\sigma^2} (x,y)$ is uniformly continuous. that is, there exists a constant $c_3>0$,
\begin{equation}\label{equ:lemma:pupper:3}
| p_{\sigma^2} (x_1,x) - p_{\sigma^2} (x_2,x) | \leq c_3 \dist_M(x_1, x_2) \quad \forall x_1,x_2,x\in M.
\end{equation}
Then, the inequality $\log(x) \leq x-1$, \eqref{equ:lemma:pupper:2} and~\eqref{equ:lemma:pupper:3} imply that
\begin{align}
\displaystyle P_0 \left(\log\frac{p_0}{p}\right) &= \iint \log \left(\frac{p_0}{p}\right) p_0 d\mu(x) dt \leq \iint \left( p_0-p \right) \frac{p_0}{p} d\mu(x)dt\\ \nonumber
 &\leq \iint \frac{c_2 c_3}{c_1} \dist_M(f(t), f_0(t)) p(t) d\mu(x)dt \leq \frac{c_2 c_3}{c_1}\text{Vol}(M) d_{\infty}(f_0, f).
\end{align}
Similarly,
\begin{equation*}
\begin{aligned}
\displaystyle P_0 \left(\log\frac{p_0}{p}\right)^2 &= \iint \left[ \log \left(\frac{p}{p_0}\right)\right]^2 p_0 d\mu(y) dt \\
 &= \iint_{p>p_0} \left[ \log \left(\frac{p}{p_0}\right)\right]^2 p_0 d\mu(y) dt + \iint_{p<p_0} \left[ \log \left(\frac{p_0}{p}\right)\right]^2 p_0 d\mu(y) dt \\
 &\leq \iint_{p>p_0} \left(\frac{p-p_0}{p_0}\right)^2 p_0 d\mu(y) dt + \iint_{p<p_0} \left(\frac{p_0-p}{p}\right)^2 p_0 d\mu(y) dt \\
 &\leq \frac{c_3^2}{c_1^2}d_{\infty} (f_0, f).
\end{aligned}
\end{equation*}
Consequently,~\eqref{equ:lemma:pupper} is satisfied with $\displaystyle C_3 = \max (\frac{c_2 c_3}{c_1}\text{Vol}(M), c_3^2/c_1^2)$.

\end{proof}

\begin{lemma}\label{lemma:approx}
Assume that $C_3$ is an arbitrarily chosen positive constant. If $f_0$ is a Lipschitz continuous function with the Lipschitz constant $L>0$ and $f\in PGF(b_n)$ such that $f(kb_n)$ is in the $r_n$-ball $B(f_0(kb_n), r_n)$ on $M$, where $\displaystyle r_n=\frac{\epsilon_n^2}{3C_3}-\frac{2L b_n}{3}$, then $d_{\infty} (f_0, f) \leq \epsilon_n^2/C_3$.
\end{lemma}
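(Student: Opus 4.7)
The plan is to fix an arbitrary $t \in [0,1]$, locate it in the appropriate subinterval of the grid, and bound $\dist_M(f_0(t), f(t))$ by a short chain of triangle inequalities exploiting (i) the Lipschitz property of $f_0$, (ii) the hypothesis that $f$ is close to $f_0$ at the grid points, and (iii) the fact that $f \in PGF(b_n)$ is a geodesic on each interval $[kb_n, (k+1)b_n]$.

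Concretely, pick $k$ so that $t \in [kb_n, (k+1)b_n]$. By the Lipschitz hypothesis on $f_0$, one has
\[
\dist_M(f_0(t), f_0(kb_n)) \leq L b_n \quad\text{and}\quad \dist_M(f_0(kb_n), f_0((k+1)b_n)) \leq L b_n.
\]
The assumption $f(jb_n) \in B(f_0(jb_n), r_n)$ gives $\dist_M(f(jb_n), f_0(jb_n)) \leq r_n$ for $j=k, k+1$. Combining these via the triangle inequality bounds the length of the geodesic segment of $f$:
\[
\dist_M(f(kb_n), f((k+1)b_n)) \leq 2r_n + L b_n.
\]
Since $f$ is geodesic on $[kb_n, (k+1)b_n]$, for the interior point $t$ we get $\dist_M(f(t), f(kb_n)) \leq 2r_n + L b_n$ as well.

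Putting it all together with one more triangle inequality,
\[
\dist_M(f_0(t), f(t)) \leq \dist_M(f_0(t), f_0(kb_n)) + \dist_M(f_0(kb_n), f(kb_n)) + \dist_M(f(kb_n), f(t)) \leq 3 r_n + 2 L b_n.
\]
Substituting $r_n = \epsilon_n^2/(3C_3) - 2Lb_n/3$ gives exactly $\epsilon_n^2/C_3$, and taking the supremum over $t$ yields $d_\infty(f_0, f) \leq \epsilon_n^2/C_3$.

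There is no real obstacle here; the statement is essentially an exercise in balancing the two sources of error (the interpolation error from the geodesic segments of length $\leq b_n$ and the pointwise error $r_n$ at the nodes), and the choice of $r_n$ in the hypothesis is precisely calibrated so that $3r_n + 2Lb_n = \epsilon_n^2/C_3$. The only mild subtlety is to notice that bounding $\dist_M(f(t), f(kb_n))$ requires controlling the length of the geodesic chord $f(kb_n)f((k+1)b_n)$, which is where we invoke the Lipschitz constant of $f_0$ at both endpoints.
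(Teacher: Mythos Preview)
Your proof is correct and follows essentially the same route as the paper: both arguments pick $t\in[kb_n,(k+1)b_n]$, use the Lipschitz bound on $f_0$ and the node hypothesis to control $\dist_M(f(kb_n),f((k+1)b_n))\le 2r_n+Lb_n$, invoke the geodesic property of $f$ to bound $\dist_M(f(kb_n),f(t))$ by the same quantity, and then assemble the final triangle-inequality chain yielding $3r_n+2Lb_n=\epsilon_n^2/C_3$. The only cosmetic difference is that the paper bundles $\dist_M(f_0(t),f_0(kb_n))+\dist_M(f_0(kb_n),f(kb_n))$ into a single $\dist_M(f_0(t),f(kb_n))\le r_n+Lb_n$, whereas you keep those two terms separate.
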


\begin{proof}
Since $f_0$ is Lipschitz,
\begin{equation}\label{equ:db}
\dist_M (f_0(kb_n), f_0(kb_n+t)) \leq L b_n \quad \forall 0\leq k <1/b_n, \, 0\leq t\leq b_n.
\end{equation}
Since $f$ is geodesic on each interval $[kb_n, kb_n+b_n]$,
\begin{equation}\label{equ:db:1}
\dist_M (f(kb_n), f(t)) \leq \dist_M (f(kb_n), f(kb_n+b_n)) \quad \text{for }\, t\in [kb_n, kb_n+b_n].
\end{equation}
By $\dist_M (f_0(kb_n), f(kb_n)) \leq r_n$, \eqref{equ:db}, \eqref{equ:db:1} and the triangle inequality,
\begin{equation}\label{equ:db:2}
\dist_M (f(kb_n), f(t)) \leq 2r_n + L b_n.
\end{equation}
Similarly,
\begin{equation}\label{equ:db:3}
\dist_M (f(kb_n), f_0(t)) \leq r_n+ L b_n.
\end{equation}
Inequalities~\eqref{equ:db:2} and~\eqref{equ:db:3} imply that
\begin{equation}\label{equ:db:4}
\dist_M (f_0(t), f(t)) \leq 3r_n + 2L b_n = \epsilon_n^2/C_3.
\end{equation}
The proof is concluded by the fact that~\eqref{equ:db:4} is true for every $t$.

\end{proof}

\begin{lemma}\label{lemma:density3}
If $f_0$ is a Lipschitz continuous function, then there exists a sufficiently large constant $C_0>0$ such that if $b_n = C_0 \epsilon_n^2$ and $n \epsilon_n^{4+\delta} \rightarrow \infty$ ($\delta>0$), then the sequence of priors $\Pi_n$ satisfies~\eqref{equ:ghosal2d4} for all $n>N_0$ ($N_0$ depends on $\delta$).
\end{lemma}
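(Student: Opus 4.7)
The plan is to combine the two reductions provided by Lemmas~\ref{lemma:pupper} and~\ref{lemma:approx} with a short-time Gaussian lower bound on the heat kernel, turning~\eqref{equ:ghosal2d4} into a lower bound on the prior probability that the discrete skeleton of a Brownian path stays close to $f_0$ on the grid $\{kb_n\}$.

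By Lemma~\ref{lemma:pupper} the event in~\eqref{equ:ghosal2d4} contains $\{d_\infty(f_0,f)\le \epsilon_n^2/C_3\}$, and by Lemma~\ref{lemma:approx} the latter contains
\[
E_n := \{\, f\in PGF(b_n) : f(kb_n)\in B(f_0(kb_n),r_n),\ 0\le k\le 1/b_n\,\},
\]
with $r_n = \epsilon_n^2/(3C_3) - 2Lb_n/3$. Writing $b_n = C_0\epsilon_n^2$, positivity of $r_n$ already imposes $C_0 < 1/(2LC_3)$; I would fix $C_0$ just below this threshold so that $r_n \geq \kappa_1 \epsilon_n^2$ for some absolute $\kappa_1>0$. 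Using the product form~\eqref{equ:discretedensity} of $\pi_n$ and setting $N=1/b_n$,
\[
\Pi_n(E_n) = \int_{B(f_0(0),r_n)} s(x_0)\prod_{k=0}^{N-1}\int_{B(f_0((k+1)b_n),r_n)} p_{b_n}(x_k,x_{k+1})\, d\mu(x_{k+1})\,d\mu(x_0).
\]

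The heart of the argument is a classical short-time Gaussian lower bound for the heat kernel of a compact Riemannian manifold: there exist constants $\kappa_2,\kappa_3>0$ such that $p_h(x,y) \ge \kappa_2 h^{-D/2}\exp(-\dist_M(x,y)^2/(\kappa_3 h))$ uniformly in $x,y\in M$ for $h$ small. For any $x_k,x_{k+1}$ in the relevant balls, the triangle inequality and Lipschitz continuity of $f_0$ give $\dist_M(x_k,x_{k+1}) \le 2r_n + Lb_n = O(\epsilon_n^2)$, so $\dist_M(x_k,x_{k+1})^2/b_n = O(\epsilon_n^2)\to 0$, whence the exponential factor is at least $1/2$ for $n$ large and $p_{b_n}(x_k,x_{k+1}) \ge \kappa_2/(2b_n^{D/2})$. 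Combining this with the volume bound $\mu(B(\cdot,r_n)) \ge \kappa_4 r_n^D$ and $s \ge s_0 > 0$ on $M$ yields
\[
\Pi_n(E_n) \ge s_0\,(\kappa_4 r_n^D)^{N+1}\left(\frac{\kappa_2}{2 b_n^{D/2}}\right)^{\!N}.
\]

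Taking logarithms and substituting $r_n\asymp\epsilon_n^2$, $b_n=C_0\epsilon_n^2$, $N=1/(C_0\epsilon_n^2)$, the dominant contribution is $N\cdot D\log(r_n/\sqrt{b_n}) \asymp -D|\log\epsilon_n|/(C_0\epsilon_n^2)$, giving $\log \Pi_n(E_n) \ge -D|\log\epsilon_n|/(C_0\epsilon_n^2) - \kappa_5/\epsilon_n^2$ for a constant $\kappa_5>0$. The target $\log \Pi_n(E_n)\ge -Cn\epsilon_n^2$ then reduces to $Cn\epsilon_n^4 \ge D|\log\epsilon_n|/C_0 + \kappa_5$, which follows from $n\epsilon_n^{4+\delta}\to\infty$ because $\epsilon_n^\delta|\log\epsilon_n|\to 0$ forces $n\epsilon_n^4/|\log\epsilon_n|\to\infty$. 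The main obstacle is securing a clean uniform-in-$(x,y)$ short-time Gaussian lower bound for $p_h$ on a general compact manifold; I would invoke a standard parametrix or Li--Yau reference, or combine Varadhan's formula with the compactness of $M\times M$. I read "$C_0$ sufficiently large" in the statement as "$C_0$ chosen as large as permitted by $C_0 < 1/(2LC_3)$", since the log-probability lower bound strictly improves as $C_0$ grows within its admissible window.
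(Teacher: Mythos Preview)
Your argument is correct and follows the same route as the paper: reduce via Lemmas~\ref{lemma:pupper} and~\ref{lemma:approx} to the skeleton event $E_n$, apply a short-time Gaussian lower bound on the heat kernel (the paper invokes Theorem~5.3.4 of~\cite{sam} for exactly this), multiply the volume and transition-probability lower bounds across the $1/b_n$ steps, and then compare the resulting $O(|\log\epsilon_n|/\epsilon_n^2)$ bound against $n\epsilon_n^2$ using $n\epsilon_n^{4+\delta}\to\infty$. Your simplification of bounding the exponential factor below by $1/2$ (since $\dist_M(x_k,x_{k+1})^2/b_n=O(\epsilon_n^2)$) is cleaner than the paper's version, which carries the exponential along and then absorbs it as a bounded constant in~\eqref{equ:transition6}.

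Your remark about $C_0$ is sharp and worth recording. Under the relation $b_n=C_0\epsilon_n^2$ as written in the lemma statement, positivity of $r_n$ indeed forces $C_0<1/(2LC_3)$, so ``sufficiently large $C_0$'' is misleading. The paper's own proof actually substitutes the opposite convention $C_0b_n=\epsilon_n^2$ (i.e., $b_n=\epsilon_n^2/C_0$); with that choice $r_n=b_n\bigl(C_0/(3C_3)-2L/3\bigr)$, positivity becomes $C_0>2LC_3$, and the phrase ``$C_0$ large enough'' is used to make the bracketed constant in~\eqref{equ:transition6} nonpositive. So you have uncovered a notational inconsistency between the statement and the proof; mathematically either convention works, with the admissible range of $C_0$ flipped accordingly, and any fixed $C_0$ in the admissible interval suffices---there is no need to push it toward a boundary.
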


\begin{proof}

By Lemma~\ref{lemma:pupper}, it is enough to show that
\begin{equation}\label{equ:transition1}
\Pi_n (f : C_3 d_{\infty} (f_0, f) \leq \epsilon_n^2) \geq \exp [-n \epsilon_n^2 C],
\end{equation}
where $C_3$ is the constant in Lemma~\ref{lemma:pupper}. Let $f\in PGF(b_n)$ and $L$ be the Lipschitz constant of $f_0$. It follows from Lemma~\ref{lemma:approx} that if
\begin{equation}\label{equ:transition2}
\dist_M (f_0(kb_n), f(kb_n)) \leq r_n = \frac{\epsilon_n^2}{3C_3}-\frac{2L b_n}{3} \quad \forall 0\leq k <1/b_n,
\end{equation}
then $d_{\infty} (f_0, f) \leq \epsilon_n^2/C_3$.

Moreover, we note that~\eqref{equ:db}, \eqref{equ:transition2} and the triangle inequality imply that
\begin{equation}\label{equ:transition3}
\dist_M (f(kb_n), f(kb_n+b_n)) \leq L b_n + 2r_n.
\end{equation}
It follows from Theorem~5.3.4 in~\cite[page 141]{sam} and~\eqref{equ:transition3} that for a constant $C_4>0$,
\begin{equation*}
\begin{aligned}
p_{b_n} (f(kb_n), f(kb_n+b_n)) &\geq \frac{C_4}{b_n^{D/2}}\exp\left[-\frac{\dist_M(f(kb_n), f(kb_n+b_n))^2}{2b_n}\right] \\
 &\geq \frac{C_4}{b_n^{D/2}}\exp\left[-\frac{(L b_n + 2r_n)^2}{2b_n}\right].
\end{aligned}
\end{equation*}

Recall that the support of $\Pi_n$ is $PGF(b_n)$. Therefore,
\begin{equation}\label{equ:transition4}
\begin{aligned}
&\Pi_n (f : C_3 d_{\infty} (f_0, f) \leq \epsilon_n^2) \leq \\
&\displaystyle \frac{\mathrm{Vol}(B (f_0(0), r_n))}{\mathrm{Vol}(M)} \prod_{1\leq k< 1/b_n} \left[ \frac{C_4}{b_n^{D/2}}\exp\left[-\frac{(L b_n + 2r_n)^2}{2b_n}\right] \mathrm{Vol}(B (f_0(kb_n), r_n)) \right].
\end{aligned}
\end{equation}

Since
\[
\text{Vol}(B (f_0(kb_n), r_n)) \geq C_5 r_n^{D}\quad \text{for a constant } C_5>0,
\]
the RHS of~\eqref{equ:transition4} is at least
\begin{equation}\label{equ:transition5}
\displaystyle \frac{1}{\mathrm{Vol(M)}}\left(C_5 r_n^{D}\right)^{1/b_n+1} \frac{C_4^{1/b_n}}{b_n^{D/(2b_n)}}\exp\left[-\frac{(L b_n + 2r_n)^2}{2b_n^2}\right].
\end{equation}
Plugging the expression of $r_n$ in~\eqref{equ:transition2} and $C_0 b_n =\epsilon_n^2 $ for a constant $C_0>0$, the logarithm of~\eqref{equ:transition5} being greater or equal to $-n\epsilon_n^2 C$ is simplified as
\begin{align}\label{equ:transition6}
\displaystyle \frac{1}{b_n} & \left[ -\log \left(\frac{C_4 C_5^{1+b_n}}{\mathrm{Vol}(M)^{b_n}}\right) - D(1+b_n) \log \left( \frac{C_0}{3C_3}-\frac{2L}{3}\right) -(\frac{D}{2}+Db_n)\log(b_n) \right] \\ \nonumber
&+\frac{1}{2} \left( \frac{2C_0}{3C_3} - \frac{L}{3} \right)^2 \leq n\epsilon_n^2 C.
\end{align}
We fix a constant $C_0 >0$ large enough so that for all $b_n$,
$$
\displaystyle -\log \left(\frac{C_4 C_5^{1+b_n}}{\mathrm{Vol}(M)^{b_n}}\right) - D(1+b_n) \log \left( \frac{C_0}{3C_3}-\frac{2L}{3}\right) \leq 0.
$$
The constant $C_0$ exists since $b_n \rightarrow 0$. Moreover, we note that since the fourth term of~\eqref{equ:transition6} is a constant, to satisfy~\eqref{equ:transition6}, it is enough to show that
\begin{equation}\label{equ:transition7}
\displaystyle -\frac{1}{b_n}\left(\frac{D}{2}+Db_n \right)\log(b_n)  \leq \frac{C}{2} n\epsilon_n^2 .
\end{equation}
Substituting $C_0 b_n =\epsilon_n^2$ in~\eqref{equ:transition7} yields the inequality
\begin{equation}\label{equ:transition8}
\displaystyle \frac{C_0}{\epsilon_n^2}K\left(\frac{D}{2}+\frac{D}{C_0}\epsilon_n^2 \right)\log\left( \frac{C_0^{1/K}}{\epsilon_n^{2/K}} \right)  \leq \frac{C}{2} n\epsilon_n^2 .
\end{equation}
We note that by using $\log(x) \leq x$, it is enough to show that
\begin{equation}\label{equ:transition9}
\displaystyle K\left(\frac{D}{2}+\frac{D}{C_0}\epsilon_n^2 \right)C_0^{1+1/K} \leq \frac{C}{2} n\epsilon_n^{4+2/K}.
\end{equation}
If we pick any $K>0$ such that $\displaystyle \frac{2}{K}<\delta$, then the right-hand side of~\eqref{equ:transition9} approaches infinity while the left-hand side is bounded. This implies that there exists a constant $N_0>0$ such that for all $n>N_0$, \eqref{equ:transition9} is satisfied, which guarantees that~\eqref{equ:transition1} and thus the lemma are true.

\end{proof}

\subsection{Conclusion of Theorem~\ref{theorem:rate}}\label{sec:conclude:dbm}

Under the assumptions that $\frac{1}{2} \leq \alpha \leq 1$, $0 < c < \frac{1}{\alpha}$ and $f_0$ is Lipschitz, we showed, in previous sections, that if we pick $b_n,M_n,\epsilon_n$ such that
\begin{equation}
\label{eq:main_conditions}
b_n = M_n^{-c}, \ b_n =C_0 \epsilon_n^2, \ n\epsilon_n^{4+\delta}\rightarrow \infty
\text{ and~\eqref{equ:condition1} \& \eqref{equ:condition2} hold},
\end{equation}
then Theorem~\ref{theorem:rate} follows directly from~\cite[Theorem~2.1]{ghosal2000}. In this section, we conclude the proof by solving the inequalities for parameters and showing the optimal choice of the sequence $\epsilon_n$ (which determines the contraction rate).

The first two equalities of~\eqref{eq:main_conditions} imply that
\begin{equation}\label{equ:m}
M_n = C_0^{-1/c} \epsilon_n^{-2/c}.
\end{equation}
Plugging~\eqref{equ:m} into~\eqref{equ:condition1} and simplifying the expression yields
\begin{equation}\label{equ:epsilon1}
6C_0^{-1/c}C_1 D(\log(21)+1) \leq n\epsilon_n^{3+2/c}.
\end{equation}
Plugging~\eqref{equ:m} into~\eqref{equ:condition2} and taking the logarithm of both sides (with simplification) results in the inequality
\begin{align}\label{equ:epsilon2}
\displaystyle &\left(-\log\left( C_0^{-(2D+3)/2}C_2\text{Vol}(M) \right) +(2D+3)\log(\epsilon_n)\right)\epsilon_n^{4/c-4\alpha+2} \\ \nonumber
&+\frac{1}{18} C_0^{-2/c+2\alpha-1} \geq n\epsilon_n^{4/c-4\alpha+4} (C+4).
\end{align}
We note that the first term of~\eqref{equ:epsilon2} approaches zero when $4/c-4\alpha+2>0$. Therefore, to satisfy~\eqref{equ:epsilon2}, we only need that the second term, which is a constant, is no less than the right-hand side. That is,
\begin{equation}\label{equ:epsilon3}
\displaystyle \frac{1}{18} C_0^{-2/c+2\alpha-1}\geq n \epsilon_n^{4/c-4\alpha+4} (C+4).
\end{equation}
If we pick $\alpha$, $c$ and $\epsilon_n$ so that the right-hand side of~\eqref{equ:epsilon3} approaches zero, then~\eqref{equ:epsilon2} is satisfied for large $n$. It follows from~\eqref{equ:epsilon1},~\eqref{equ:epsilon3} and the fact that $n \epsilon_n^{4+\delta} \rightarrow \infty$ that the constants $\alpha$ and $c$ need to satisfy
\[
\displaystyle 3+\frac{2}{c} \leq 4+\delta <  \frac{4}{c}-4\alpha+4.
\]
One choice is $c=\frac{2}{1+\delta}$ and $\alpha=\frac{1}{2}$. Under this choice, the sequence $\epsilon_n=n^{-1/(4+3\delta/2)}$ satisfies~\eqref{equ:epsilon1} and~\eqref{equ:epsilon3}. Since $\delta>0$ can be arbitrarily small, the best achievable contraction rate is
\[
\displaystyle \epsilon_n = n^{-1/4+\epsilon}\quad \text{for any fixed $\epsilon >0$}.
\]

\section{Proof of Theorem~\ref{theorem:wcbm}}\label{sec:proof:cbm}

We first prove a technical lemma (Lemma~\ref{lemma:nearpoints}) which requires some definitions and then conclude the proof of Theorem~\ref{theorem:wcbm}. Let $Q_{x_0,...,x_k}^{T,...,kT}$ be the Brownian bridge probability measure on the path space $\displaystyle V^{(k)}=\left\{ f\in C([0, T], M) : f(0)=x_0, f(iT)=x_i,\, \forall 0\leq i\leq k\right\}$. In particular, we denote by $Q_{x,y}^T$ the Brownian bridge probability measure on the path space $\displaystyle V=\left\{ f\in C([0, T], M) : f(0)=x, f(T)=y\right\}$.

\begin{lemma}\label{lemma:nearpoints}
If $x,y\in M$ s.t. $\dist_M(x,y) < \epsilon_0/2$, then there exists $T_0>0$ such that
\[
Q_{x,y}^T (\dist_M(f, x) \geq \epsilon_0) < 1, \quad \forall\, T\leq T_0,
\]
where $\displaystyle \dist_M(f, x) = \max_{t\in [0,T]} \dist_M(f(t),x)$. In other words, the Brownian bridge assumes positive measure over the subset of paths $\displaystyle \left\{ f\in V : f([0,T])\subset B(x, \epsilon_0)\right\}$.

\end{lemma}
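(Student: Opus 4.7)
Set $A := \{f \in V : \max_{t\in[0,T]} \dist_M(f(t),x) < \epsilon_0\}$; the claim is equivalent to $Q_{x,y}^T(A) > 0$ for every $T \leq T_0$. My plan is to view $Q_{x,y}^T$ as Wiener measure on $C([0,T],M)$ starting at $x$ conditioned on the endpoint $f(T)=y$, express $Q_{x,y}^T(A)$ as a ratio of the Dirichlet and unrestricted heat kernels on the ball $B(x,\epsilon_0)$ evaluated at $(x,y)$, and then apply the short-time ``principle of not feeling the boundary'' to force this ratio close to $1$.

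\textbf{Ratio representation.} Let $p_T^{\epsilon_0}(x,\cdot)$ denote the transition density at time $T$ of Brownian motion on $M$ started at $x$ and killed upon its first exit from $B(x,\epsilon_0)$, and let $P_x$ be the Wiener measure on $C([0,T],M)$ started at $x$. For every Borel $U\subset M$,
\[
P_x(A\cap\{f(T)\in U\}) = \int_U p_T^{\epsilon_0}(x,z)\,d\mu(z), \qquad P_x(\{f(T)\in U\}) = \int_U p_T(x,z)\,d\mu(z).
\]
Taking $U=B(y,r)$ in both identities, dividing, and letting $r\to 0$ (using Lebesgue differentiation together with continuity of both kernels from Theorem~4.1.4 of~\cite{sam}) yields
\[
Q_{x,y}^T(A) \;=\; \frac{p_T^{\epsilon_0}(x,y)}{p_T(x,y)}.
\]

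\textbf{Short-time comparison.} Since $\dist_M(x,y)<\epsilon_0/2$, the point $y$ sits at distance at least $\epsilon_0/2$ from $\partial B(x,\epsilon_0)$. Applying the strong Markov property at the first exit time $\tau$ of $B(x,\epsilon_0)$,
\[
p_T(x,y) - p_T^{\epsilon_0}(x,y) \;=\; \E_x\!\bigl[\mathbf{1}_{\tau<T}\, p_{T-\tau}(f(\tau),y)\bigr],
\]
and $\dist_M(f(\tau),y)\geq \epsilon_0/2$ on $\{\tau<T\}$. The standard Gaussian upper bound for the heat kernel on the compact manifold $M$ then bounds this correction by $C\exp(-c\epsilon_0^2/T)$, while the matching Gaussian lower bound together with $\dist_M(x,y)^2<\epsilon_0^2/4$ gives $p_T(x,y)\geq c'T^{-D/2}\exp(-\epsilon_0^2/(c''T))$. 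Provided the constants are compatible, the correction is exponentially smaller than $p_T(x,y)$, so $p_T^{\epsilon_0}(x,y)/p_T(x,y)\to 1$ as $T\to 0$; fixing $T_0$ for which this ratio exceeds $1/2$ delivers $Q_{x,y}^T(A)\geq 1/2>0$ for all $T\leq T_0$.

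\textbf{Main obstacle.} The crux of the argument is precisely this compatibility of the exponential constants in the ``not feeling the boundary'' estimate: the exponent in the lower Gaussian bound at $(x,y)$ must strictly dominate the exponent in the upper Gaussian bound at the boundary gap $\epsilon_0/2$, so that the killing correction is not only small but small compared to $p_T(x,y)$ itself. For compact $M$ such two-sided Gaussian heat-kernel bounds are standard and follow either from the parametrix construction already cited in~\cite{sam} or from Li--Yau-type inequalities; the crucial feature is that the gap $\epsilon_0/2$ between $y$ and $\partial B(x,\epsilon_0)$ is strictly positive and independent of $T$, which is why the exponential suppression of the boundary-killing term ultimately beats the Varadhan exponent at $y$ as $T\to 0$.
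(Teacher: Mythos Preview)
Your argument is correct and essentially reproves, from scratch, the large-deviation estimate that the paper simply quotes. The paper's proof is a two-line citation: it invokes Equation~2.6 of Hsu's 1990 Brownian-bridge paper~\cite{BBRM90}, which gives (for $T$ small)
\[
T\log Q_{x,y}^T\bigl(\dist_M(f,x)\ge\epsilon_0\bigr)\;\le\;-\epsilon_0^2+4\dist_M(x,y)^2,
\]
and observes that the right side is negative once $\dist_M(x,y)<\epsilon_0/2$. Your route---writing $Q_{x,y}^T(A)=p_T^{\epsilon_0}(x,y)/p_T(x,y)$ and then running the ``not feeling the boundary'' comparison via the strong Markov property---is exactly the machinery that underlies Hsu's estimate, so you are in effect rederiving the cited lemma rather than bypassing it.

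The one point that deserves more care is the one you already flag: with generic Li--Yau constants the upper-bound exponent $c$ and lower-bound exponent $c''$ need not satisfy $c(\epsilon_0/2)^2 > \dist_M(x,y)^2/c''$ from the hypothesis $\dist_M(x,y)<\epsilon_0/2$ alone. You correctly point to Varadhan's short-time asymptotics (uniform on compacta, available from the parametrix in~\cite{sam}) as the fix, since it lets both exponents approach $1/2$ and the strict inequality $\dist_M(x,y)<\epsilon_0/2$ then provides the slack. An alternative that avoids sharp constants entirely is to also exploit the factor $P_x(\tau<T)$ in your strong-Markov identity: this contributes an additional $\exp(-c'\epsilon_0^2/T)$ (the Brownian path must first travel distance $\epsilon_0$), and combining the two legs gives an effective exponent corresponding to a path of length at least $\epsilon_0+(\epsilon_0-\dist_M(x,y))>3\epsilon_0/2$, which comfortably dominates $\dist_M(x,y)<\epsilon_0/2$ even with crude Gaussian constants. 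Either refinement closes the gap; the paper sidesteps the whole issue by citing the finished inequality.
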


\begin{proof}
Equation~2.6 in~\cite{BBRM90} implies that there exists $T_0>0$ such that if $T\leq T_0$, then
\begin{equation}\label{equ:lemma:nearpoints1}
T\log(Q_{x,y}^T (\dist_M(f, x) \geq \epsilon_0)) \leq -\epsilon_0^2+4 \dist_M(x,y)^2.
\end{equation}
That is,
\begin{equation}\label{equ:lemma:nearpoints2}
Q_{x,y}^T (\dist_M(f, x) \geq \epsilon_0) \leq \exp[(-\epsilon_0^2+4 \dist_M(x,y)^2)/T] <1.
\end{equation}
The first inequality in~\eqref{equ:lemma:nearpoints2} follows from~\eqref{equ:lemma:nearpoints1} and the second inequality follows from the assumption that $\dist_M(x,y) < \epsilon_0/2$.

\end{proof}

We now conclude the proof of Theorem~\ref{theorem:wcbm}. Recall that the Kullback-Leibler (KL) divergence between $p_{f_0}$ and $p_f$ is defined as
\[
\displaystyle d_{KL}(p_{f_0}, p_f) =\int_{[0,1]\times M} p_{f_0}\log\left( \frac{p_{f_0}}{p_f} \right) dtd\mu(x).
\]
A corollary of Theorem~6.1 in~\cite{bayes65} implies that if $\Pi$ assumes positive mass on any Kullback-Leibler neighborhood of $p_{f_0}$, then the posterior distribution is weakly consistent. Thus, it is enough to show that
\[
\Pi(\{f: d_{KL} (p_{f_0}, p_f) \leq \epsilon\}) >0, \quad \forall \epsilon>0.
\]
We note that Lemma~\ref{lemma:pupper} shows that $d_{KL}(p_{f_0}, p_f)$ is upper bounded by $d_{\infty} (f_0, f)$. Therefore, we only need to prove that
\[
\Pi(\{f: d_{\infty} (f_0, f) \leq \epsilon\}) >0, \quad \forall \epsilon>0.
\]

Fix a positive number $\epsilon_1 <\epsilon$. We consider a regular (e.g., equidistant) grid of $[0,1]$ with spacing $T$. We assume the regular grid satisfies the following conditions:
\begin{enumerate}\label{cond:theorem:wcbm1}
  \item $B(x_i, \epsilon_1)\subset B(x, \epsilon),\, \forall x\in f_0 ([iT, (i+1)T])$ and $x_i=f_0(iT)$,
  \item $\dist_M(x_i, x_{i+1}) < \epsilon_1/4$.
\end{enumerate}
The Lipschitz assumption of $f_0$ guarantees the existence of $T$. Indeed, Condition (1) is guaranteed by the triangle inequality of the metric $\dist_M$ and the Lipschitz assumption and Condition (2) is guaranteed by picking a sufficiently small $T$.

Given a positive number $\delta <\epsilon_1/24$, the triangle inequality implies that
\begin{equation}\label{equ:theorem:wcbm2}
B(\hat{x}_i, 2\epsilon_1/3)\subset B(x_i, \epsilon_1) \,\text{ and }\, \dist_M(\hat{x}_i, \hat{x}_{i+1})<\epsilon_1/3, \quad \forall \hat{x}_i \in B(x_i, \delta).
\end{equation}
Applying Lemma~\ref{lemma:nearpoints} to $\hat{x}_i$ and $\hat{x}_{i+1}$ implies that
$Q_{\hat{x}_0,...,\hat{x}_{1/T}}^{T,...,1}$ assumes positive measure over the set of paths
\[
\displaystyle V_{\hat{\mb{x}}}=\left\{ f : f(0)=x_0, f(iT)=\hat{x}_i, f([iT, (i+1)T])\in B(\hat{x}_i, 2\epsilon_1/3),\, \, \forall i\in [0, 1/T]\right\}.
\]
If $f\in V_{\hat{\mb{x}}}$, then for any $t\in [iT, (i+1)T]$,
\begin{equation}\label{equ:theorem:wcbm3}
f(t)\in B(\hat{x}_i, 2\epsilon_1/3)\subset B(x_i, \epsilon_1)\subset B(f_0(t), \epsilon).
\end{equation}
The first inclusion in~\eqref{equ:theorem:wcbm3} follows from~\eqref{equ:theorem:wcbm2} and the second inclusion in~\eqref{equ:theorem:wcbm3} follows from condition (1) of the regular grid. By definition,~\eqref{equ:theorem:wcbm3} implies that $V_{\hat{\mb{x}}} \subset \{f: d_{\infty} (f_0, f) \leq \epsilon\}$. Therefore,
\[
\displaystyle \Pi(\{f: d_{\infty} (f_0, f) \leq \epsilon\}) \geq \int_{\hat{\mb{x}}\in \prod_{i=0}^{1/T} B(x_i, \delta)} Q_{\hat{x}_0,...,\hat{x}_{1/T}}^{T,...,1} (V_{\hat{\mb{x}}}) \Pi_n (d\hat{\mb{x}})>0,
\]
where $\Pi_n$ is the probability measure of the discretized Brownian motion with spacing $b_n=T$ and $\hat{\mb{x}}=(\hat{x}_1,\ldots, \hat{x}_{1/T})^T$.

\section{Extensions of The Regression Framework}\label{sec:extension}

In this section, we briefly discuss two extensions of the current framework, where Theorem~\ref{theorem:rate} and~\ref{theorem:wcbm} equally apply. In Section~\ref{sec:unknown_sigma}, we consider the case where the variance $\sigma^2$ is unknown. Section~\ref{sec:more_general_p} explains how to possibly relax the assumption that $p(t)$ has a positive lower bound.

%
%

\subsection{The Case of Unknown Variance $\sigma^2$}
\label{sec:unknown_sigma}
The mapping $\Phi$ of~\eqref{map:contden} assumes that $\sigma^2$ is a fixed and known parameter. If it is unknown, the prior on it can be chosen as the uniform distribution on the interval $[1/A, A]$ for some constant $A>0$ (or other distributions as long as it is bounded away from zero and infinity).

Under this prior of $\sigma^2$, the probability density of $(t,x)$ is given by
\[
\displaystyle p_f (t,x) = \int_{\sigma^2\in [1/A, A]} p_{\sigma^2}(f(t), x) p(t) d \sigma^2.
\]
Since $p_{\sigma^2}(x, y)$ and its partial derivatives (w.r.t.~$x$ and $y$) are uniformly continuous in the variable $\sigma^2$ over the interval $[1/A, A]$, it is easy to see that Lemmas~\ref{lemma:boundp} and~\ref{lemma:pupper} still hold for this type of probability densities. Therefore, the contraction rate for the case of unknown variance is the same as the case of fixed variance.

\subsection{More General $p(t)$}
\label{sec:more_general_p}

Throughout the paper, we assume that the distribution of the predictor $t$ has a smooth density $p(t)$ on $[0, 1]$ with strict lower and upper bounds $0< m_p \leq M_p$. This assumption is used in Lemma~\ref{lemma:boundp}. Since $p(t)$ is continuous, the upper bound $M_p$ always exists, but the lower bound can be restrictive. We can relax the lower bound on $p(t)$ as follows. Let $r>0$ and $S_r = \{t\in[0,1 ] | p(t)\geq r\}$. By following the same arguments in the proof, we note that the posterior distribution contracts at the same rate to the true function when considering the $L_q$ norm of functions restricted to $S_r$.

\section{Numerical Demonstrations}\label{sec:exp}

In this section, we demonstrate the proposed Bayesian scheme and compare it with a kernel method for the simple manifold $\Sb^1$. We also investigate the effect of changing various parameters for this special case.

One reason of using $\Sb^1$ is its simplicity of visualization. Indeed, $\Sb^1$ can be identified with the interval $[0, 2\pi]$ and this makes it easy to plot the $\Sb^1$-valued functions. The other reason is that $\Sb^1$, as a Lie group, has the addition operator on it. Thus, the kernel method in Euclidean spaces directly applies to this situation, with special awareness of the issue of averaging (more specifically, the average of the points $0$ and $2\pi$ on $\Sb^1$ is $0$, not $\pi$).

For the discretized and continuous BM Bayesian schemes, we obtain the maximum a posteriori (MAP) probability estimators by implementing a simulated annealing (SA) algorithm on the corresponding posterior distributions. The starting state (function) of SA is defined as follows: the value $f(t)$ at time $t$ is the mode of all observed values, whose observation times are in $[t-0.05, t+0.05]$. For the discretized BM Bayesian scheme, the sidelength parameter $b_n$ is fixed to be $1/40$. For the kernel method, we use the Matlab code~\cite{ksr_web} implemented according to the Nadaraya-Watson kernel regression with the optimal bandwidth suggested by Bowman and Azzalini~\cite{ksr}.

We remark that we use Brownian motion of various scales and not the standard one, $BM_t$, assumed in the proof. Nevertheless, the convergence result clearly holds for any scaled Brownian motion $BM_{ct}$, where $c>0$. In fact, $c$ is an additional hyperparameter (see Section~\ref{sec:hyperc}).

\subsection{Comparison with kernel regression}
In the first experiment, we compare three estimators, namely, the discretized BM MAP (DBM) estimator, the continuous BM MAP (CBM) estimator and the kernel regression estimator (KER). We fix the scaling hyperparameter $c=0.01$ for DBM and CBM and the optimal bandwidth for KER. We generate datasets of $30$ observations according to the pdf $p_{f_0(t)}(x)$ defined in~\eqref{equ:model}, where $\sigma^2=0.1$ and $f_0 : [0,1]\rightarrow \Sb^1$ defined by
$$
f_0(t) := (t+0.5)^2,\quad \text{for }t\in [0,1].
$$
Figure~\ref{fig:models} shows the original function and its different estimators according to DBM, CBM and KER. The $L_1$ errors between the estimated functions and the true function are also displayed. Among them, the CBM achieves the minimal $L_1$ error.

\begin{figure}[htb!]
\centering
\includegraphics[width=.8\textwidth]{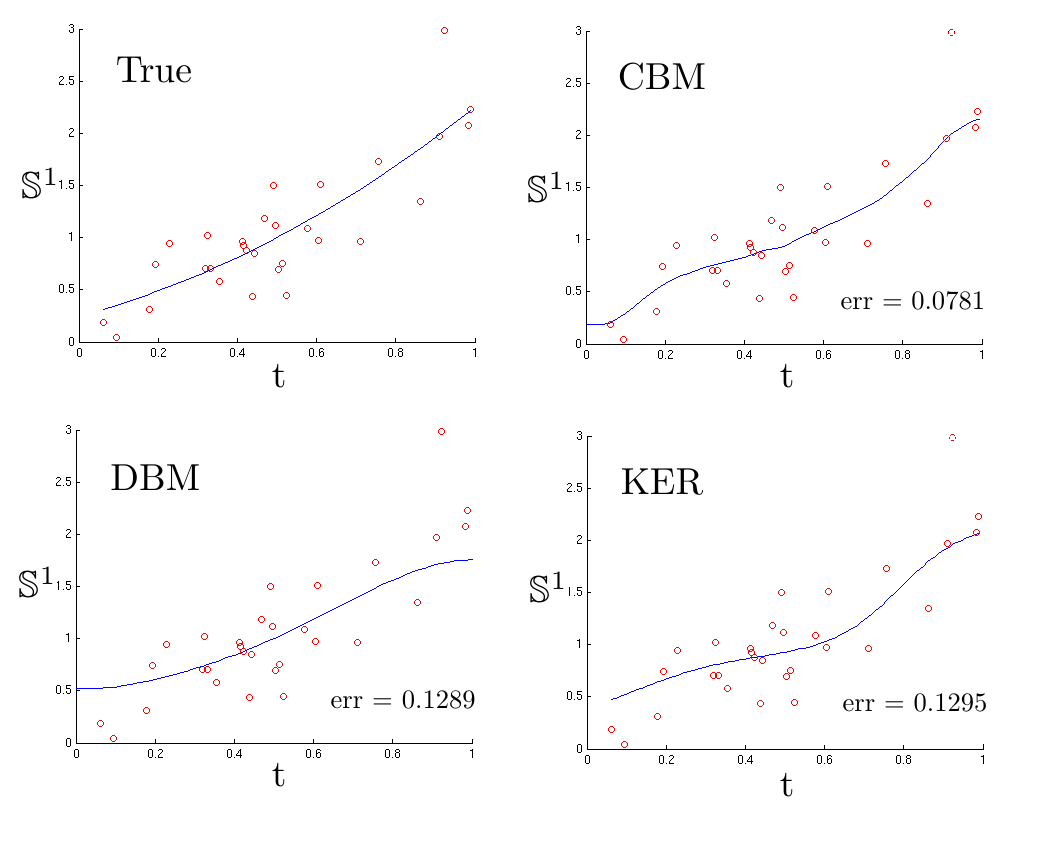}
\caption{Demonstration of the continuous and discretized BM Bayesian estimators with comparison to a kernel estimator. The data was generated according to the pdf $p_{f_0(t)}(x)$, where $f_0$ is demonstrated in the top left subfigure. The estimations obtained by CBM (continuous Brownian motion), DBM (discretized Brownian motion) and KER (kernel method) are shown in the rest of the subfigures together with their $L_1$ errors.}
\label{fig:models}
\end{figure}

\subsection{The hyperparameter $c$}\label{sec:hyperc}
The hyperparameter $c$ plays a similar role as the hyperparameter in the regularized regression. The second experiment shows how the hyperparameter $c$ (with values in $\{0.01, 0.1, 1, 10\}$) affects the estimation. We fix a dataset of 40 observations with noise variance $0.05$ from the same function as in the first experiment. Figures~\ref{fig:cbmc} and~\ref{fig:dbmc} demonstrate the MAP estimators obtained by CBM and DBM respectively.
In both figures, the estimators become smoother when $c$ decreases. Indeed, smaller $c$ means shorter time for the BM to travel. But smaller $c$ also introduces more bias in the estimators. This is more evident for DBM in Figure~\ref{fig:dbmc} while CBM seems less sensitive to small values of $c$ (see Figure~\ref{fig:cbmc}).

\begin{figure}[htb!]
\centering
\includegraphics[width=.8\textwidth]{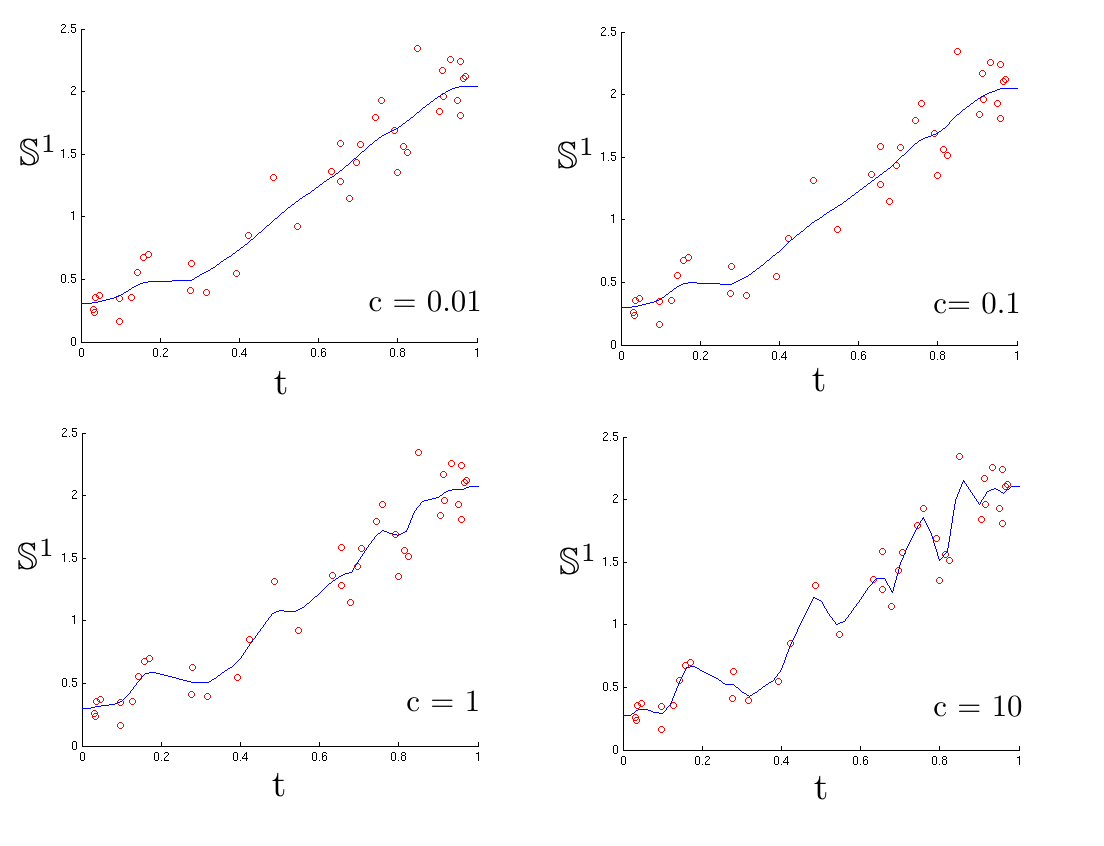}
\caption{The estimations obtained by CBM for different values of $c$ ($c=0.01,0.1,1,10$), where $c$ is the scaling parameter of the Brownian motion.}
\label{fig:cbmc}
\end{figure}

\begin{figure}[htb!]
\centering
\includegraphics[width=.8\textwidth]{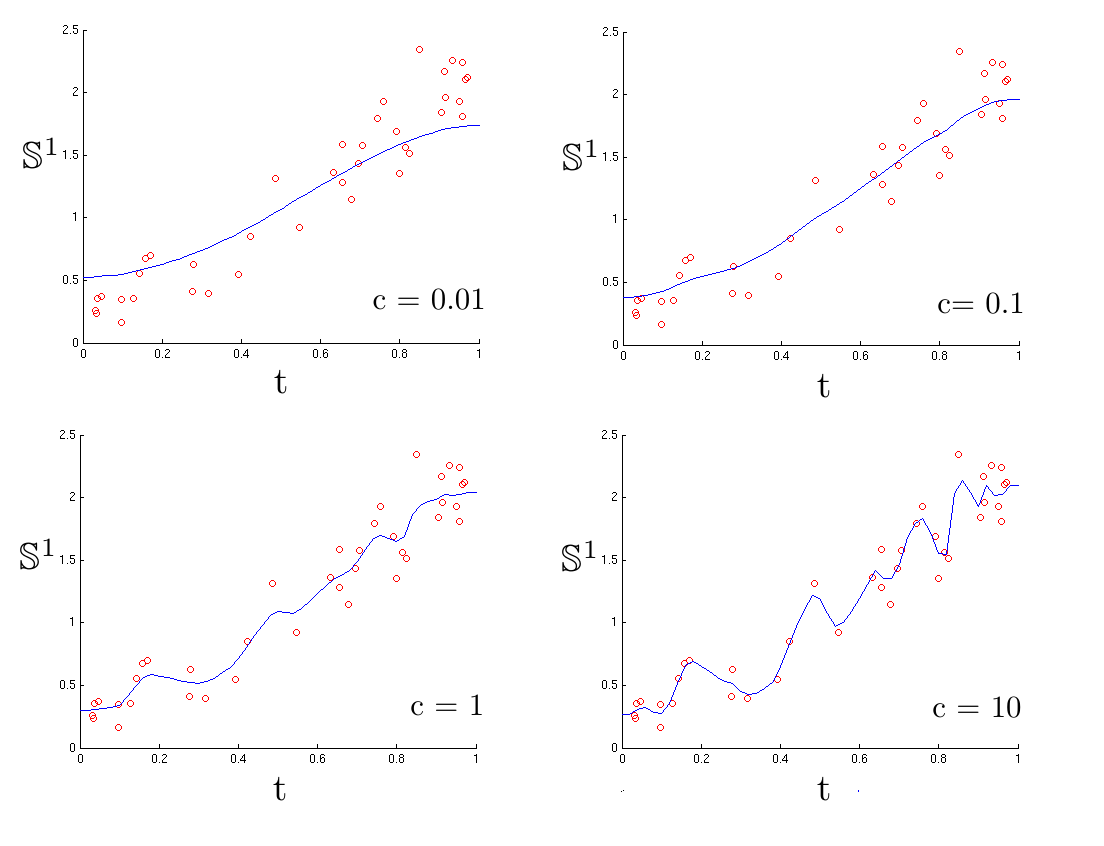}
\caption{The estimations obtained by DBM for different values of $c$ ($c=0.01,0.1,1,10$), where $c$ is the scaling parameter of the Brownian motion. Unlike CBM, an underestimation (i.e., sensitivity to bias) is observed when $c=0.01$.}
\label{fig:dbmc}
\end{figure}

\subsection{The sidelength parameter $b_n$}
For DBM we have another important parameter, $b_n$, which determines the number of pieces of a piecewise geodesic function. When $b_n = 1$, the piecewise geodesic function becomes geodesic. In this experiment, we show the change of $L_1$ error of the DBM estimator for different choices of $b_n$ ($1/b_n$ ranges from $1$ to $100$). The data set is generated from the same model as in the first experiment. Figure~\ref{fig:bn} shows that for geodesic functions or functions with large $b_n$, there is a large $L_1$ error due to large bias. As $b_n$ becomes smaller, there is a steady decrease of the $L_1$ error due to the decrease of bias.

 \begin{figure}[htb!]
\centering
\includegraphics[width=.8\textwidth]{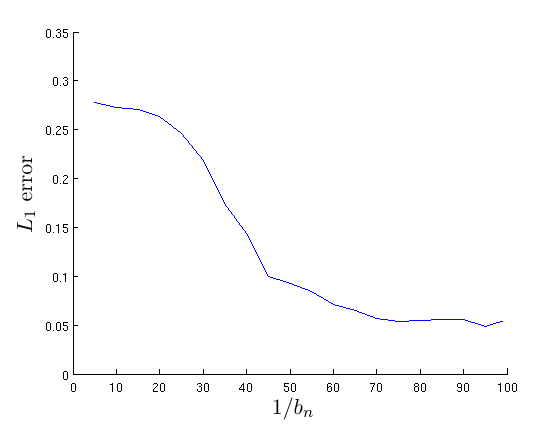}
\caption{$L_1$ error of DBM for different sidelengths $b_n$.}
\label{fig:bn}
\end{figure}


%
%
%

\section{Conclusion}

We established the consistency of the Bayesian estimator with a Brownian motion prior in the manifold regression setting. For the discretized Brownian motion, we even specified a contraction rate via a well-known general approach~\cite{ghosal2000, MR2418663}. We thus propose a new nonparametric Bayesian framework with solid statistical analysis beyond the existing kernel methods and Gaussian process priors. In fact, one of our motivations to this work is the incapability of applying a Gaussian process prior to manifold responses that lack linear structure.

We also list a few interesting questions for possible future study.

\subsection{Better Quantitative estimate of $C_0$ and $C_1$}

The constants $C_0$ and $C_1$ in Lemma~\ref{lemma:boundp} (comparing the distance of functions and the distance of distributions) are not specified due to our proof by contradiction. The specification of their dependencies on the underlying Riemannian geometry worth further investigation.

\subsection{$L_{\infty}$ Convergence}

We only proved $L_p$-convergence for the Brownian motion prior. It is interesting to investigate the $L_{\infty}$ convergence if it exists at all. If it does not exist, then it is interesting to know if a smoother prior (e.g., integrated BM) has $L_{\infty}$ convergence.

\subsection{A Better Contraction Rate?}
\label{sec:better_contraction}
For regression with real-valued predictors and responses, van Zanten~\cite{MR2791382} established posterior contraction rate of $n^{-1/4}$ for $n$ samples under the $L_q$-norm, where $1\leq q< \infty$. His analysis does not seem to extend to our setting. It is possible that even for the general case of manifold-valued regression the contraction rate is $n^{-1/4}$ and not just $n^{-1/4+\epsilon}$.
The particular method used here does not seem to obtain a better rate.

\end{document}